\documentclass[10pt,a4paper,twoside]{amsart}

\usepackage{amsfonts, amssymb, amsmath, amsthm}
\usepackage{mathrsfs} % \mathscr
\usepackage{latexsym}%utilisation des symboles LaTeX pour avoir un beau LaTeX
\usepackage{enumerate}
\usepackage{multicol}
\usepackage{verbatim}
\usepackage{hyperref}
\usepackage{url}
\usepackage{csquotes}
% \usepackage[%
%     backend=biber,
%     style=numeric-comp,
%     citetracker=true,
%     pagetracker=true,
%     hyperref=true,
%     backref=true,
%     firstinits=true,
%     bibencoding=utf8
%     ]{biblatex}
    
%     \addbibresource{JRTT-v08.bib}

%\newif\iffootnote
%\let\Footnote\footnote
%\renewcommand\footnote[1]{\begingroup\footnotetrue\Footnote{#1}\endgroup}

\usepackage{mathabx} %% for widecheck
%rappel du titre de la section

\newtheorem{thm}{Theorem}
\newtheorem{lem}[thm]{Lemma}

\theoremstyle{remark}

\let\ve=\varepsilon

%{\smile\atop\frown}
\DeclareMathOperator{\Log}{log}

\def\Oc{\mathcal{O}}
\def\Ocal{\mathcal{O}}

\def\1{1\!\!\!1}

\let\ve=\varepsilon

\title{An explicit upper bound for $L(1,\chi)$ when $\chi$ is quadratic} 
%\footnote{AMS Classification: 11L03, 11L07, 11L26, secondary : 11N35}
%\footnote{Keywords: Large sieve inequality, circle method}

\author{D. R. Johnston} 
% For Johnston and Trudgian
\address{School of Science\\
UNSW Canberra at ADFA\\
ACT, Australia}
\email{daniel.johnston@adfa.edu.au}

\author{O. Ramar\'e}
\address{CNRS, Aix Marseille Universit\'e, I2M, Marseille, France }
\email{olivier.ramare@univ-amu.fr}

\author{T. Trudgian}
\address{School of Science\\
UNSW Canberra at ADFA\\
ACT, Australia}

\email{t.trudgian@adfa.edu.au}

%\date{\sl January, the 7th of 2004}

\keywords{$L(1,\chi)$, character sums}
\subjclass{11M20}

\begin{document}
\hypersetup{pageanchor=false}
\maketitle
\hypersetup{pageanchor=true}

\begin{abstract}
  %\texttt{File \jobname.tex} 
  \noindent
  We consider Dirichlet $L$-functions $L(s, \chi)$ where $\chi$ is a
  non-principal quadratic character to the modulus $q$. We make
  explicit a result due to Pintz and Stephens by showing that
  $|L(1, \chi)|\leq \frac{1}{2}\log q$ for all $q\geq 2\cdot 10^{23}$
  and $|L(1, \chi)|\leq \frac{9}{20}\log q$ for all $q\geq 5\cdot 10^{50}$.
\end{abstract}

%%%%%%%%%%%%%%%%%%%%%%%%%%%%%%%%%%%%%%%%%%%%%%%%%%%%%%%%%
\section{Introduction and results}\label{cello}
%%%%%%%%%%%%%%%%%%%%%%%%%%%%%%%%%%%%%%%%%%%%%%%%%%%%%%%%%

A central problem in number theory concerns estimates on $L(1, \chi)$,
where $\chi$ is a non-principal Dirichlet character to the modulus
$q$, and where $L(s, \chi)$ is its associated Dirichlet
$L$-function. Bounding sums of $\chi(n)$ trivially leads to the bound
$|L(1, \chi)| \leq \log q + O(1)$. The P\'{o}lya--Vinogradov
inequality allows one to improve this to $(1/2)\log q + O(1)$. An
interesting history of these developments is given by Pintz
\cite{PintzVII}.

Explicit versions of the above results date back to Hua
\cite{Hua}. See also work by Louboutin \cite{Louboutin*01} and the second
author \cite{Ramare*01,Ramare*02c} for finding small pairs $c, q_{0}$ such that
$L(1, \chi)\leq (1/2) \log q + c$ for all $q\geq q_{0}$. It appears
difficult to improve on these bounds for generic $q$.

When $q$ is prime, the best result is due to Stephens \cite{Stephens},
namely that $|L(1, \chi)|\leq \frac{1}{2}(1 - e^{-1/2} + o(1))\log q$,
where $\frac{1}{2}(1- e^{-1/2}) = 0.1967\ldots$. This result has been
extended to arbitrary moduli by Pintz in~\cite{Pintz*77-5, PintzVII}. We aim at making
the Pintz--Stephens result partially explicit in the following theorems.

%%%%%%%%%%%%%%%%%%%%%%%%%
\begin{thm}\label{goal}
Let $\chi$ be a quadratic odd primitive  Dirichlet character
  modulo $q\ge 2\cdot 10^{23}$. We have $L(1,\chi)\le (\log q)/2$. 
\end{thm}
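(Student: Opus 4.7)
The plan is to follow the Pintz--Stephens strategy, made fully explicit. Since Pintz--Stephens gives $L(1,\chi)\le \tfrac{1}{2}(1-e^{-1/2}+o(1))\log q$, the target bound $(1/2)\log q$ already leaves considerable slack asymptotically; the real task is to control the $o(1)$ term quantitatively and thereby pin down the threshold $q_{0}=2\cdot 10^{23}$.

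I would begin by choosing a smooth cutoff $\omega\colon[0,\infty)\to\mathbb{R}$ with $\omega(0)=1$ and compact support, together with a scale $N\asymp\sqrt{q}$. Mellin inversion then gives, after shifting the contour past the simple pole of the Mellin transform $\widetilde{\omega}(s)$ at $s=0$ (whose residue is $\omega(0)=1$),
\[
\sum_{n\ge 1}\frac{\chi(n)\,\omega(n/N)}{n}\;=\;L(1,\chi)\;+\;\frac{1}{2\pi i}\int_{\Re s=-\sigma}\widetilde{\omega}(s)\,N^{s}\,L(1+s,\chi)\,ds
\]
for some small $\sigma>0$. The smoothed short sum on the left is bounded trivially by something like $\log(bN)+c_{\omega}$ (with $b$ the diameter of the support of $\omega$ and $c_{\omega}$ an explicit smoothing constant), while the contour integral on the right is bounded via an explicit P\'olya--Vinogradov estimate for quadratic primitive characters, of the form $|S(t)|\le c\sqrt{q}\log q+c'\sqrt{q}$.

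Combining these two estimates and optimising over $\omega$ and $N$, one obtains a clean inequality of the form $L(1,\chi)\le A(q)\log q$ with an explicit function $A(q)$ tending to $\tfrac{1}{2}(1-e^{-1/2})$ as $q\to\infty$. The sharp asymptotic constant is recovered by Pintz's averaging idea --- integrating over a family of scales $N$ --- which effectively reduces the naive factor $\log N$ by a multiplicative $(1-e^{-1/2})$. The final step is then the numerical verification that $A(q)\le 1/2$ for $q\ge 2\cdot 10^{23}$.

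The main obstacle is the book-keeping of explicit constants: the P\'olya--Vinogradov constant for quadratic primitive characters, the smoothing loss from the specific $\omega$, the size of the Mellin contour integral (depending on $\sigma$), and all boundary contributions must be tracked with care. The quadratic hypothesis is essential because real-valuedness of $\chi$ and the theta-function functional equation yield sharper character-sum bounds than are available for general primitive characters, and this sharpness is what makes the threshold $q_{0}$ as small as $2\cdot 10^{23}$ rather than astronomically larger.
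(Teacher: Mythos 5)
Your plan, as it stands, cannot reach the stated bound. A smoothed Mellin/P\'olya--Vinogradov argument at scale $N\asymp\sqrt q$ --- however you optimise over $\omega$ and $N$ --- only yields $L(1,\chi)\le\tfrac12\log q+c$ with some positive explicit $c$, and that is never $\le\tfrac12\log q$; to obtain a finite threshold such as $2\cdot10^{23}$ the leading constant must be pushed strictly below $1/2$. The step you invoke for this, ``Pintz's averaging idea, integrating over a family of scales $N$'', is not where the factor $1-e^{-1/2}$ comes from, and averaging over scales inside your contour identity produces no such multiplicative saving. In Stephens' and Pintz's method, and in the paper, the gain comes from the prime side: one writes $H^{-x}\sum_{n\le H^x}\chi(n)\log n$ as a convolution of $\chi(m)\Lambda(m)/m$ against the normalised character sum $f$, compares $\sum_{m\le H}\Lambda(m)f(x_m)/(m\log H)$ with $F(1)=\int_0^1 f$ using explicit prime-counting estimates (bounds for $\psi(x)-x$ of Rosser--Schoenfeld, B\"uthe and Broadbent et al., explicit asymptotics for $\sum_{n\le x}\Lambda(n)/n$, and a bound for $\int_1^\infty|\sum_{n\le u}\Lambda(n)/n-\log u+\gamma|\,du/u$), and then feeds the resulting functional inequality for $F$ into an optimization lemma whose extremal function $\varphi(y)=2(y-y\log y-\theta)$ is exactly what forces $\theta\le 1/\sqrt e$, i.e.\ the constant $\tfrac12(1-e^{-1/2})$. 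None of this machinery, nor the explicit prime-number-theorem inputs needed to make it numerical at $q\ge 2\cdot10^{23}$, appears in your sketch, so the quantitative heart of the proof is missing.

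You also misplace the role of the quadratic hypothesis. It is not used to obtain sharper character-sum bounds via real-valuedness or a theta-function functional equation: the P\'olya--Vinogradov-type inequalities employed (Frolenkov--Soundararajan, Lapkova, and the smoothed bound of Levin--Pomerance--Soundararajan) are valid for arbitrary primitive characters. Quadraticity enters because the optimization argument requires non-negative weights $u_m=(1+\chi(m))\Lambda(m)/m$, which holds precisely because $\chi$ is real, and because the upper bound $h(1,x)+h(\chi,x)\le 2x\log H$ likewise rests on $1+\chi(m)\ge 0$. Without this ingredient, and without an explicit substitute for the Stephens--Pintz functional-inequality step, the argument does not close.
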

%%%%%%%%%%%%%%%%%%%%%%% 
For even characters, this is proved for $q\ge2$ in \cite{Ramare*01}
after several papers by Louboutin, the last of which is \cite{Louboutin*01}.
Bounds relying on additional constraints on the characters at the
small primes have been investigated by Louboutin in
\cite{Louboutin*02b}, by the second author in \cite{Ramare*02c}, by Saad Eddin in
\cite{Saad-Eddin*16} and by Platt and Saad Eddin in \cite{Platt-SaadEddin*13}.
On taking $q$ to be larger, we can improve on the factor $1/2$ in
Theorem \ref{goal}.
%%%%%%%%%%%%%%%%%%%%%%%%
\begin{thm}\label{goal2}
Let $\chi$ be a quadratic primitive Dirichlet character modulo $q$. 
The inequality $L(1,\chi)\le (9/40)\log q$ holds true when $\chi$ is
 even and $q\ge 2\cdot 10^{49}$ or $\chi$ is odd and $q\geq 5\cdot 10^{50}$.
\end{thm}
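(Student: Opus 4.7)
The proof of Theorem \ref{goal2} must push past the $(1/2)\log q$ barrier that the Pólya--Vinogradov inequality yields (and which powers Theorem \ref{goal}), so the plan is to invoke the Pintz--Stephens framework and render it quantitatively explicit. For real $\chi$, that framework exploits the non-negativity of the coefficients $a_\chi(n) = \sum_{d \mid n}\chi(d)$ in the Dirichlet series $\zeta(s)L(s,\chi) = \sum_n a_\chi(n) n^{-s}$; asymptotically it produces the constant $\tfrac{1}{2}(1 - e^{-1/2}) \approx 0.1967$, so the target $9/40 = 0.225$ sits comfortably between this limit and the Pólya--Vinogradov constant $1/2$.

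Concretely, I would start from the Dirichlet hyperbola identity
$$\sum_{n \leq x}a_\chi(n) = \sum_{d \leq y}\chi(d)\lfloor x/d\rfloor + \sum_{e \leq x/y}\bigl(S_\chi(x/e) - S_\chi(y)\bigr),$$
where $S_\chi(t) = \sum_{d \leq t}\chi(d)$. Applying an explicit Pólya--Vinogradov bound to each partial sum $S_\chi$, and the approximation $\lfloor x/d\rfloor = x/d + O(1)$, this becomes
$$\sum_{n \leq x}a_\chi(n) = xL(1,\chi) + E(x,y,q)$$
with an effective remainder $E$. Combining this with the non-negativity $\sum_{n \leq x}a_\chi(n) \geq 0$ and an explicit upper bound on $\sum_{n \leq x}a_\chi(n)$ via $a_\chi(n) \leq d(n)$ (or a sharper smoothed variant in the spirit of Stephens), together with an optimized choice $y = q^\alpha$ and $x = q^{1+\beta}$, should yield the desired inequality. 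The even and odd cases must be handled separately because they differ in the contribution of the completion terms at the boundary, which explains the asymmetric cutoffs $2 \cdot 10^{49}$ versus $5 \cdot 10^{50}$.

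The principal obstacle is the explicit bookkeeping of all error constants: several of the error terms are each $o(\log q)$ asymptotically, but their combined size at finite $q$ is large enough to force the cutoff $q_0$ up from $2 \cdot 10^{23}$ (as in Theorem \ref{goal}) to the $10^{49}$--$10^{50}$ range here. A second subtlety lies in the joint optimization of $\alpha$, $\beta$ and any auxiliary smoothing parameters so that the leading constant lands exactly at $9/40$ while the error terms remain controllable; this is ultimately a numerical matter. The final step is a verification that the claimed thresholds suffice to realize the inequality $L(1,\chi) \leq (9/40)\log q$ once every explicit constant has been traced.
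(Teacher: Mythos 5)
There is a genuine gap: the route you sketch cannot get below the P\'olya--Vinogradov constant $1/2$, let alone to the constant in the theorem. In your hyperbola identity every character sum $S_\chi$ is still estimated by an explicit P\'olya--Vinogradov bound of size $V\asymp \sqrt{q}\log q$, so the remainder $E(x,y,q)$ is at best of order $\sqrt{x\sqrt{q}}\log q + y$; for this to be $o(x)$ you are forced to take $x\ge q^{1/2+o(1)}$. Your proposed upper bound $a_\chi(n)\le d(n)$ then gives $\sum_{n\le x}a_\chi(n)\lesssim x\log x$, and the comparison with $xL(1,\chi)$ yields only $L(1,\chi)\le \log x + o(\log q)\approx \tfrac12\log q$ --- i.e.\ nothing beyond what partial summation with P\'olya--Vinogradov already gives, and this is exactly the barrier the theorem must break. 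The non-negativity of $a_\chi(n)=\sum_{d\mid n}\chi(d)$ is the classical lever for \emph{lower} bounds on $L(1,\chi)$; it provides no upper-bound leverage here. The genuine Pintz--Stephens constant $\tfrac12(1-e^{-1/2})$ that you cite rests on the Burgess bound (character sums of length about $q^{1/4}$), and the paper explicitly explains that usable explicit Burgess bounds are not available in the relevant ranges, so that ingredient cannot be invoked either. Your plan also leaves the decisive step (``should yield the desired inequality'') entirely unargued, and the even/odd asymmetry does not come from ``completion terms at the boundary'' but from the different explicit P\'olya--Vinogradov constants for even and odd characters and the different majorizations of $f(1)$ and of the error term $R_\chi$.

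What actually carries the proof is Stephens' functional-inequality mechanism, made explicit. One sets $f(x)=H^{-x}\sum_{n\le H^x}\chi(n)$ and $F(x)=\int_0^x f(t)\,dt$, notes $L(1,\chi)=F(1)\log H+O^*(V/H)$, and expands $\sum_{n\le H^x}\chi(n)\log n$ through $\log n=\sum_{d\mid n}\Lambda(d)$ to obtain (after integration and an appeal to explicit prime-counting estimates) the renewal-type inequality of Lemma~\ref{L7} with the non-negative weights $u_m=(1+\chi(m))\Lambda(m)/m$. The crucial extra input is hypothesis $(H_3)$, proved in Lemma~\ref{L4}: if $\sum_{m\le H}(1+\chi(m))\Lambda(m)/(m\log H)$ is large (i.e.\ $\chi(p)=+1$ too often), then $F(1)$ must be small; this is where the comparison of $\sum_m \Lambda(m)f(x_m)/m$ with $F(1)$ (Lemmas~\ref{complicated} and~\ref{FirstApp}) enters. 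The purely analytic optimization of Lemma~\ref{opt}, via the extremal function $\varphi(y)=2(y-y\log y-\theta)$, then gives the dichotomy ``either $F(1)\le 2(1-1/\sqrt{e})$ or a numerical inequality in $\theta$ fails'', and the numerics with $H=BV$ ($B=80$ in the even case, $B=145$ in the odd case) convert this into the stated thresholds. None of this structure --- the functional inequality for $F$, the dichotomy encoded in $(H_3)$, or the extremal analysis --- appears in your proposal, and without it the target constant is out of reach.
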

%%%%%%%%%%%%%%%%%%%%%%%
%We have $\tfrac9{20}=0.45$.

We note that, on the Generalized Riemann hypothesis much more is
known. Littlewood \cite{Littlewood} showed that
$L(1, \chi) \ll \log\log q$. This has been made explicit for large $q$
in \cite{LLS} by Lamzouri, Li and Soundararajan,
and then for all $q$ in \cite{Languasco} by Languasco and the third author. Finally,
although we do not consider lower bounds on $L(1, \chi)$, we direct
the reader to a survey of explicit and inexplicit bounds of
Mossinghoff, Starichkova and the third author in
\cite{MST}, and to the recent work \cite{Langsolo}.

The outline of this paper is follows. In \S \ref{captain2} we collect
the necessary explicit results on character sums. In \S \ref{captain3}
we prepare the technical preliminaries to Stephens' approach, and
analyse these in \S \ref{captain4}. Our \S \ref{captain5} is purely
centred on the optimization in (an improved version of) Stephens' method, and contains no
number-theoretic input. Finally, in \S \ref{captain6} we prove
Theorems \ref{goal} and \ref{goal2}.

%%%%%%%%%%%%%%%%%%
We use the notation $f(x)=\mathcal{O}^*(g(x))$ to mean that
$f(x)\leq |g(x)|$ for the range of $x$ considered. We also make use of
the following notation.
%%%%%%%%%%%%%%%%%
We define
\begin{equation}
  \label{defh}
  h(\chi,y)=\sum_{n\le H^y}\frac{\chi(n)\Lambda(n)}{n},\quad
  h(1,y)=\sum_{n\le H^y}\frac{\Lambda(n)}{n}
\end{equation}
as well as
%We define $f(x)$ and $F(x)$ as partial sums and integrals:
\begin{equation}
  \label{deff}
  f(x)=\sum_{n\le H^x}\frac{\chi(n)}{H^x}, \quad 
  F(x)=\int_0^x f(t)dt
  =\sum_{n\le H^x}\frac{\chi(n)}{n\log H}
  -\frac{f(x)}{\log H}.
\end{equation}
 Our aim is to
 majorize~$F(1)$.
We further define
\begin{equation}
  \label{defell}
  \ell(y)=H^{-y}\sum_{n\le H^y}\chi(n)\log n.
\end{equation}
It is also convenient to introduce the points
\begin{equation}
  \label{defxm}
  x_m=1-\frac{\log m}{\log H}.
\end{equation}

%%%%%%%%%%%%%%%%%%%%%%%%%%%%%%%%%%%%%%%%%%%%%%%%%%%
\section{Preliminary results}\label{captain2}
%%%%%%%%%%%%%%%%%%%%%%%%%%%%%%%%%%%%%%%%%%%%%%%%%%%
 
We now list a trivial result that follows immediately from partial summation.
%%%%%%%%%%%%%%%%%%%
\begin{lem}
 \label{loc1}
 When $x\ge0$, we have $\sum_{n\le x}1/\sqrt{n}\le 2\sqrt{x}$.
\end{lem}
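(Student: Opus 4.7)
The plan is to deduce the inequality by integral comparison, which is exactly what partial summation delivers in this setting. First I would apply Abel summation with weight $1/\sqrt t$ against the counting function $S(t)=\lfloor t\rfloor$ on $[1,x]$, obtaining
\[
\sum_{n\le x}\frac{1}{\sqrt n}
=\frac{\lfloor x\rfloor}{\sqrt x}+\frac{1}{2}\int_1^x\frac{\lfloor t\rfloor}{t^{3/2}}\,dt.
\]
Then I would invoke the trivial inequality $\lfloor t\rfloor\le t$ in both the boundary term and the integrand. The boundary term is thereby bounded by $\sqrt x$, and the integral collapses to $\tfrac12\int_1^x t^{-1/2}\,dt=\sqrt x-1$, so the total is at most $\sqrt x+(\sqrt x-1)=2\sqrt x-1\le 2\sqrt x$, as required.

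A neater route avoids partial summation altogether: since $t\mapsto 1/\sqrt t$ is decreasing on $(0,\infty)$, one has $1/\sqrt n\le\int_{n-1}^{n}t^{-1/2}\,dt$ for every integer $n\ge 2$. Summing from $n=2$ to $\lfloor x\rfloor$ telescopes to $2\sqrt{\lfloor x\rfloor}-2$, and adding the $n=1$ contribution (which equals $1$) yields exactly the same bound $2\sqrt{\lfloor x\rfloor}-1\le 2\sqrt x$. Either approach also requires me to dispose of the degenerate ranges $0\le x<1$ (where the sum is empty and the inequality is $0\le 2\sqrt x$) and $1\le x<2$ (where the sum equals $1$ and $2\sqrt x\ge 2$); both are immediate.

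There is no substantive obstacle: the statement is a textbook calculus estimate, and the author's parenthetical that it ``follows immediately from partial summation'' points squarely to the first approach above. My only care in writing the proof formally would be to ensure the bound is stated uniformly for all real $x\ge 0$ rather than only for integer upper limits, which is handled automatically by carrying $\lfloor x\rfloor$ through the computation as above.
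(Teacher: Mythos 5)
Your argument is correct, and your first route (Abel summation against $\lfloor t\rfloor$ with the trivial bound $\lfloor t\rfloor\le t$) is precisely the partial-summation argument the paper alludes to when it calls the lemma trivial; the edge cases $0\le x<1$ and $1\le x<2$ are handled properly. The alternative integral-comparison proof is an equally valid, equivalent variant, so nothing further is needed.
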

%%%%%%%%%%%%%%%%%%%
The following result is slightly more subtle. %%%%%%%%%%%%%%%%%%%
\begin{lem}
 \label{loc2}
 When $x\ge y\ge1$, we have $\sum_{y\le n\le x}1/n\le 1+\log(x/y)$.
\end{lem}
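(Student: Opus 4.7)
The plan is a standard comparison of the sum with an integral, being careful to retain enough slack in the first term so as to avoid any floor/ceiling trouble near $y$. Set $a = \lceil y \rceil$ and $b = \lfloor x \rfloor$; if $a>b$ the sum is empty and the bound is trivial since $1+\log(x/y)\ge 1>0$, so I will assume $a\le b$.

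The key inequality is that $1/n\le \int_{n-1}^n dt/t$ for $n\ge 2$, which follows from monotonicity of $1/t$. Applying this to every term with $n\ge a+1$ and then telescoping yields
\begin{equation*}
\sum_{n=a+1}^{b}\frac{1}{n}\le \int_{a}^{b}\frac{dt}{t}=\log(b/a).
\end{equation*}
Since $a\ge y$ and $b\le x$ we have $b/a\le x/y$, so this remaining sum is at most $\log(x/y)$.

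It remains to treat the single term $n=a$ separately. The hypothesis $y\ge 1$ gives $a=\lceil y\rceil\ge 1$, hence $1/a\le 1$. Adding this to the previous bound gives
\begin{equation*}
\sum_{y\le n\le x}\frac{1}{n}=\frac{1}{a}+\sum_{n=a+1}^{b}\frac{1}{n}\le 1+\log(x/y),
\end{equation*}
which is the claim. There is no real obstacle here; the only point to watch is that one cannot absorb the first term into the integral (since $1/a$ is not bounded by $\int_{a-1}^a dt/t$ at $a=1$), which is precisely why the constant $1$ appears in the statement.
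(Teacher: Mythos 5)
Your proof is correct, and it takes a different route from the paper. You split off the first term $1/\lceil y\rceil\le 1$ and bound the tail by the integral comparison $1/n\le\int_{n-1}^{n}dt/t$, which telescopes to $\log(\lfloor x\rfloor/\lceil y\rceil)\le\log(x/y)$; this is a fully elementary argument that works uniformly in the whole range $x\ge y\ge 1$, with the edge cases (empty sum, $\lceil y\rceil=\lfloor x\rfloor$) handled transparently. The paper instead invokes an Euler--Maclaurin expansion $\sum_{n\le x}1/n=\log x+\gamma+(\tfrac12-\{x\})/x+\mathcal{O}^*(1/(8x^2))$, subtracts the two partial sums to get $\sum_{y\le n\le x}1/n=\log(x/y)+\mathcal{O}^*(29/32)$ for $x\ge 2$, and disposes of small $x$ separately. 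Your approach buys simplicity and avoids any case split or appeal to an asymptotic formula; the paper's approach carries more information (a two-sided estimate with an explicit error below $1$, in the spirit of the other explicit estimates it uses throughout), but for the stated one-sided inequality your argument is leaner and entirely sufficient.
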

%%%%%%%%%%%%%%%%%%%
\begin{proof}
Using Euler--Maclaurin summation one can show that
$$\sum_{n\leq x} \frac{1}{n} = \log x + \gamma + \frac{\left(\frac{1}{2} - \{x\}\right)}{x} + \mathcal{O}^*\left(\frac{1}{8x^{2}}\right),$$
whence
\begin{equation}\label{net}
\sum_{y\leq n\leq x} \frac{1}{n} = \log(x/y) + \mathcal{O}^*\left( \frac{1}{2x} + \frac{1}{2y} + \frac{1}{8x^{2}} + \frac{1}{8y^{2}}\right).
\end{equation}
The lemma is clearly true when $x=1$. Therefore, for $x\geq 2$ and
$y\geq 1$ we have, by \eqref{net} that
$\sum_{y\leq n \leq x} n^{-1} -\log(x/y)=\Ocal^*(29/32)$, and we are done.
\end{proof}

%%%%%%%%%%%%%%%%%%%%%%%%%%%%%%%%%%%%%%%%%%%%%%%%%%%
%\section{Bounds on primes}
%%%%%%%%%%%%%%%%%%%%%%%%%%%%%%%%%%%%%%%%%%%%%%%%%%%
We now list some bounds related to the prime number theorem. The first
is (a simplification of) a classical result from Rosser and
Schoenfeld, see \cite[Thm 12]{RS62}.
%%%%%%%%%%%%%%%%%%%
\begin{lem}
 \label{appLambda0}
 When $x\ge 0$, we have $\psi(x)\le 1.04\,x$.
\end{lem}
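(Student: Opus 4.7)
The plan is to derive this immediately from the explicit bound of Rosser and Schoenfeld. Their Theorem~12 of \cite{RS62} establishes an inequality of the shape $\psi(x)\le c\,x$ valid uniformly for all $x>0$, with a constant $c$ strictly less than $1.04$ (their sharper form gives roughly $c=1.03883$). Since any such $c$ suffices for the stated bound, the lemma follows at once for $x>0$, while at $x=0$ both sides vanish.

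If I wanted to give a self-contained proof of the weaker bound $1.04$, I would run the classical Chebyshev--Rosser argument. The starting point is the identity
$$
\log\binom{2n}{n}=\sum_{p^k\le 2n}\bigl(\lfloor 2n/p^k\rfloor-2\lfloor n/p^k\rfloor\bigr)\log p,
$$
combined with $\binom{2n}{n}\le 4^n$, which yields $\psi(2n)-\psi(n)\le 2n\log 2$ and hence the crude estimate $\psi(x)\le(2\log 2)\,x$. To push the constant below $1.04$ one then takes a weighted combination of several binomial coefficients, as in Rosser's original 1941 argument, engineered so that the contributions from small primes cancel favourably. The remaining small-$x$ range is handled by direct computation of $\psi(x)$ at integer values up to a modest threshold.

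The main obstacle, in the self-contained route, would be the bookkeeping: choosing the right combination of binomial coefficients and tracking the error terms to ensure that the constant $1.04$ is respected uniformly, not just asymptotically. Since the true asymptotic constant is $1$ and the Rosser--Schoenfeld estimate beats $1.04$ comfortably on every relevant range, neither step presents any genuine difficulty, which is precisely why the authors choose to invoke \cite{RS62} directly rather than reprove the bound.
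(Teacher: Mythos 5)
Your proposal is correct and matches the paper exactly: the authors also simply invoke Theorem~12 of Rosser--Schoenfeld, noting that it gives the constant $1.03883$ (an approximation to $\psi(113)/113$), which they round up to $1.04$. The additional Chebyshev-style sketch is unnecessary but harmless.
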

We note that the result of Rosser and Schoenfeld gives $1.03883$ in
Lemma \ref{appLambda0}, which is an approximation to
$\psi(113)/113$. To improve the bound in Lemma \ref{appLambda0} it
would be necessary to take $x\geq x_{0} > 113$, which, while possible,
would complicate greatly the ensuing analysis for only a marginal
improvement.
%%%%%%%%%%%%%%%%%%%

The second is an explicit bound of the form $\psi(x) -x = o(x)$ coming
from \cite[Table 15]{BKLNW} by Broadbent, Kadiri, Lumley, Ng, and Wilk.
%
%%%%%%%%%%%%%%%%%%%
\begin{lem}
 \label{appLambda}
 When $x\geq 10^5$, we have $|\psi(x)-x|\le 0.64673\,x/(\log x)^2$.
\end{lem}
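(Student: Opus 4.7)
The plan is to appeal directly to Table 15 of \cite{BKLNW}, which records, for several thresholds $x_0$, explicit constants $\eta(x_0)$ such that $|\psi(x)-x|\le \eta(x_0)\,x/(\log x)^2$ holds for all $x\ge x_0$. The first task is to locate the row corresponding to $x_0=10^5$ (or the nearest tabulated threshold at most $10^5$) and to confirm that the associated constant does not exceed $0.64673$. Since the authors of \cite{BKLNW} arrange their table precisely so that such lookups are immediate, this is the main content of the proof.

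Should the smallest available threshold in the relevant column lie above $10^5$, say at some $x_0^\star$, a supplementary verification on the interval $[10^5,x_0^\star]$ is required. Because $\psi$ is piecewise constant with jumps only at prime powers, and the function $x\mapsto x/(\log x)^2$ is smooth and increasing throughout this range, the inequality reduces to inspecting both sides just before and just after each jump of $\psi$. This is a finite computation, well within reach of a short sieve or a standard table of prime powers, and can be combined with an interval-arithmetic check to keep the argument rigorous.

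The main obstacle is in fact outsourced: the genuine analytic work --- explicit zero-free regions for $\zeta$, height-restricted zero-density estimates, and the careful book-keeping of error terms --- sits inside \cite{BKLNW}. The content of the present lemma is then the judicious selection of the pair $(x_0,\eta)=(10^5,0.64673)$, which is precisely the shape of bound that will mesh with the subsequent Stephens-type analysis in \S\ref{captain3} and \S\ref{captain4}. Any residual finite-range verification is mechanical and does not involve additional ideas.
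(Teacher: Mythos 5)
Your proposal matches the paper's treatment: the lemma is not proved in the text but is simply quoted from \cite[Table 15]{BKLNW}, exactly the lookup you describe. Your contingency of a finite verification below the smallest tabulated threshold is a reasonable precaution but plays no role in the paper, which takes the stated constant directly from the table.
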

%%%%%%%%%%%%%%%%%%%
%
On the Riemann hypothesis we have $\psi(x) -x = O(x^{1/2 +
  \epsilon})$. The following result, from \cite[Thm 2]{Buethe} of
B\"uthe,
gives an explicit version of an even sharper bound for a finite range. 
%%%%%%%%%%%%%%%%%%%
\begin{lem}
 \label{appLambda2}
 When $11< x\le 10^{19}$, we have $|\psi(x)-x|\le 0.94\sqrt{x}$.
\end{lem}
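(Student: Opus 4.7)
The plan is to derive the bound from the classical explicit formula for $\psi$, leveraging the fact that the Riemann hypothesis has been verified numerically for all zeros up to some height $T_0$ far exceeding the range relevant to $x\le 10^{19}$. Starting from
\[
\psi_0(x)=x-\sum_{\rho}\frac{x^{\rho}}{\rho}-\log(2\pi)-\tfrac12\log(1-x^{-2}),
\]
the problem reduces to bounding the sum over nontrivial zeros $\rho$ by $0.94\sqrt{x}$ uniformly for $x$ in the stated range.

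The first step is to split the zero sum at a carefully chosen truncation height $T\le T_0$. For zeros with $|\gamma|\le T$, RH verification gives $\rho=\tfrac12+i\gamma$ and hence $|x^{\rho}/\rho|=\sqrt{x}/|\rho|$, so their total contribution is at most $\sqrt{x}\sum_{|\gamma|\le T}1/|\rho|$, a constant times $\sqrt{x}$ that can be evaluated (or rigorously bounded above) from existing databases of low zeros. The second step is to handle the tail $|\gamma|>T$, where one loses control of $\Re\rho$; here a smoothing of $\psi$ (a Selberg/Logan-type weight) converts the tail into an exponentially convergent sum, at the cost of a perturbation term controlled by $\psi$ at shifted arguments. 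Standard explicit zero-density estimates (for example from Trudgian's work on $N(T)$) then give the decay of the tail.

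The third step is to relate the smoothed version back to $\psi(x)-x$ itself: the smoothing introduces an error of order (smoothing width)$\cdot x/\sqrt x$ which must be balanced against the gain on the zero sum by optimizing the smoothing parameter as a function of $x$. Finally, for the small range just above $11$ (where $\sqrt{x}$ is tiny and the explicit formula is not sharp), one falls back on direct computation of $\psi(x)$ and its comparison with $x$ over a finite table.

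The main obstacle is obtaining the constant as sharp as $0.94$: the trivial bound $\sum 1/|\rho|$ diverges, so the smoothing and the choice of $T$ must be fine-tuned, and the RH-verification height $T_0$ (e.g.\ Platt's $T_0\approx 3\cdot 10^{12}$) must be shown to be large enough that, for every $x\le 10^{19}$, the tail contribution plus the smoothing perturbation plus the archimedean correction terms all fit comfortably under $0.94\sqrt{x}$. This is essentially a quantitative optimization problem rather than a conceptual one, and it is precisely where B\"uthe's analysis (which we invoke) does the heavy lifting.
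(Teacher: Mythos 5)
The paper offers no independent proof of this lemma: it is quoted directly as Theorem 2 of B\"uthe's paper, exactly the result your argument ultimately invokes to ``do the heavy lifting.'' Since your outline of the explicit-formula/smoothing strategy is essentially a description of B\"uthe's own method and you defer to his analysis for the constant $0.94$, your proposal matches the paper's approach (citation of B\"uthe) and is fine.
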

%%%%%%%%%%%%%%%%%%%
We remark that slightly weaker versions of Lemma \ref{appLambda2}, but
ones that hold in a longer range of $x$ have been provided by the
first author in \cite{Johnston}.
We require the following result to be used in tandem with Lemma \ref{appLambda2}.
%%%%%%%%%%%%%%%%%%%
\begin{lem}
 \label{appLambda3}
 When $e^{40}\le x$, we have $|\psi(x)-x|\le 1.994\cdot10^{-8}\,x$.
\end{lem}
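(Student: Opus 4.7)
The constant $1.994\cdot 10^{-8}$ is too sharp to be extracted from the bounds already collected in this section: inserting $x=e^{40}$ into Lemma \ref{appLambda} gives only $0.64673/40^{2}\approx 4.04\cdot 10^{-4}$, which is about four orders of magnitude weaker than what is asserted. The plan, therefore, is to appeal directly to an explicit PNT bound of the form $|\psi(x)-x|\le\epsilon\,x$ valid for $x\ge x_{0}$, for a carefully chosen threshold matching our range.

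Such bounds are tabulated in the paper \cite{BKLNW} of Broadbent, Kadiri, Lumley, Ng, and Wilk already invoked for Lemma \ref{appLambda}; alongside the $x/(\log x)^{2}$-shaped inequality used there, the same reference provides tables of admissible pairs $(x_{0},\epsilon)$ obtained by combining the best current explicit zero-free region for $\zeta(s)$ with a truncated explicit formula. The pair $(x_{0},\epsilon)=(e^{40},\,1.994\cdot 10^{-8})$ should appear as (or be immediately read off from) such an entry, at which point the lemma follows by a single citation.

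Should one wish to verify the constant from scratch rather than by invoking the table, the route is standard but numerically delicate: fix a Mossinghoff--Trudgian style zero-free region, use an explicit bound on $N(T)$, truncate the Riemann--von Mangoldt explicit formula at a suitable height $T$, and balance the contribution of the truncation error against that of the omitted zeros at $x=e^{40}$. The main obstacle is purely computational, namely carrying this optimization through with enough precision to attain the stated constant; this is precisely the work performed once and for all in \cite{BKLNW}, so the cleanest path is to defer to their table.
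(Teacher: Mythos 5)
Your approach is exactly the paper's: the lemma is quoted directly from the tables of admissible pairs $(x_0,\epsilon)$ for $|\psi(x)-x|\le\epsilon x$ in \cite{BKLNW} (specifically Table~8 there), with no further argument needed. Your observation that Lemma~\ref{appLambda} is far too weak to yield the constant, and your sketch of how such table entries are produced, are correct but not part of the paper's proof, which is the single citation.
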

%%%%%%%%%%%%%%%%%%%
This is obtained directly from \cite[Table 8]{BKLNW}. The key feature
here is that $e^{40}<10^{19}$ so that Lemma \ref{appLambda2} and Lemma
\ref{appLambda3} between them cover all values of $x> 11$. Better
results are available when $x$ is very large, say $\log x \geq 1000$
--- see \cite{PT} by Platt and Trudgian, and \cite{JY} by the first
author and Yang --- but Lemmas \ref{appLambda2} and
\ref{appLambda3} suffice for our needs.

We now turn to estimates on $\tilde\psi(u):= \sum_{n\leq u}\Lambda(n)/n$ to aid in the evaluation of $h(\chi, y)$ and $h(1,y)$ in (\ref{defh}). To obtain such estimates we correct a result of the second author in \cite{Ramare2013}.

%%%%%%%%%%%%%%%%%%%%%%%%%%%%%%%%%%
\begin{lem}\label{Ramcorrected}
    For $x\geq 71$ we have
    \begin{align*}
        \sum_{n\leq x}\frac{\Lambda(n)}{n}=\log x-\gamma+\frac{\psi(x)-x}{x}+\frac{0.047}{\sqrt{x}}+\frac{\log(2\pi)+10^{-4}}{x}+E(x),
    \end{align*}
    where
    \begin{equation*}
        E(x)=
        \begin{cases}
            1.75\cdot 10^{-12},&1\leq x< 2R\log^2 T_0\\
            \frac{1+2\sqrt{(\log x)/R}}{2\pi}\exp(-2\sqrt{(\log x)/R}), &x\geq 2R\log^2 T_0,
        \end{cases}
    \end{equation*}
    with\footnote{The value of $R$ comes from work by Kadiri
      \cite{Kadiri} on the classical zero-free region for the
      zeta-function. This can be lowered using more recent results
      \cite{MT} and \cite{MTY}, respectively by Mossinghoff and
      Trudgian and by Mossinghoff, Trudgian and Yang, but it is
      inconsequential for our purposes.} $R=5.69693$ and
    $T_0=2.44\cdot 10^{12}$.
\end{lem}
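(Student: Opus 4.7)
The plan is to use Abel summation to express $\sum_{n\le x}\Lambda(n)/n$ through the tail integral $\int_x^\infty(\psi(t)-t)t^{-2}\,dt$, then estimate this tail by substituting the von Mangoldt explicit formula and controlling the sum over non-trivial zeros with Kadiri's classical zero-free region.

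Concretely, partial summation gives
\[
\sum_{n\le x}\frac{\Lambda(n)}{n} = \frac{\psi(x)}{x} + \int_1^x \frac{\psi(t)}{t^2}\,dt = \log x + 1 + \frac{\psi(x)-x}{x} + \int_1^x \frac{\psi(t)-t}{t^2}\,dt,
\]
and the classical identity $\int_1^\infty (\psi(t)-t)t^{-2}\,dt = -\gamma -1$, an immediate consequence of the asymptotic $\sum_{n\le x}\Lambda(n)/n = \log x - \gamma + o(1)$ combined with the prime number theorem, rewrites this as
\[
\sum_{n\le x}\frac{\Lambda(n)}{n} = \log x - \gamma + \frac{\psi(x)-x}{x} - \int_x^\infty \frac{\psi(t)-t}{t^2}\,dt.
\]
Inserting the explicit formula $\psi_0(t)-t = -\log(2\pi) - \tfrac{1}{2}\log(1-t^{-2}) - \sum_\rho t^\rho/\rho$ (the switch between $\psi$ and its symmetric version $\psi_0$ at prime powers is harmless and its contribution is absorbed into the $10^{-4}/x$ slack) and integrating term by term against $dt/t^2$, one extracts $+\log(2\pi)/x$ from the constant, an easy $O(1/x^3)$ contribution from the $\log(1-t^{-2})$ piece, and leaves
\[
\sum_\rho \frac{x^{\rho-1}}{\rho(1-\rho)}
\]
as the only nontrivial term to bound.

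The main obstacle --- and plausibly where the 2013 calculation needs correcting --- is this zero sum. For the non-trivial zeros with $|\gamma_\rho|\le T_0 = 2.44\cdot 10^{12}$, all of which are numerically verified to lie on the critical line, each term satisfies $|x^{\rho-1}|=1/\sqrt{x}$, and so
\[
\Bigl|\sum_{|\gamma_\rho|\le T_0}\frac{x^{\rho-1}}{\rho(1-\rho)}\Bigr| \le \frac{1}{\sqrt{x}}\sum_\rho \frac{1}{\rho(1-\rho)} = \frac{2+\gamma-\log(4\pi)}{\sqrt{x}} < \frac{0.047}{\sqrt{x}},
\]
using the classical evaluation $\sum_\rho 1/(\rho(1-\rho)) = 2+\gamma-\log(4\pi) \approx 0.0462$ coming from the Hadamard factorisation of $\xi$ and the functional equation; this accounts for the $0.047/\sqrt{x}$ term. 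For zeros with $|\gamma_\rho|>T_0$ one invokes Kadiri's zero-free region $\Re(\rho)\le 1 - 1/(R\log|\gamma_\rho|)$ with $R=5.69693$, truncates the sum at a free height $T$, and optimises $T$ against the Riemann--von Mangoldt zero count. The resulting bound is precisely $E(x)$, with the piecewise split corresponding to whether the optimal $T$ exceeds $T_0$ (giving the exponentially small branch $(1+2\sqrt{(\log x)/R})\exp(-2\sqrt{(\log x)/R})/(2\pi)$ when $x\ge 2R\log^2 T_0$) or falls below it (giving the uniform bound $1.75\cdot 10^{-12}$ for smaller $x$). A small numerical check at $x=71$ ensures that the displayed constants are attained throughout the claimed range.
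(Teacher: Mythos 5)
Your route is essentially the one underlying the paper's proof, re-derived rather than cited: the paper simply quotes the corrected form of Lemma~2.2 of \cite{Ramare2013} (as fixed in \cite{CSV}), quotes \cite[\S 5]{Ramare2013} for the bound $0.047/\sqrt{x}+E(x)$ on the zero sum, and then bounds $\int_x^\infty\bigl(\log 2\pi+\tfrac12\log(1-u^{-2})\bigr)u^{-2}\,du$ elementarily using $x\ge 71$. Your reconstruction of the identity (Abel summation, $\int_1^\infty(\psi(t)-t)t^{-2}\,dt=-1-\gamma$, insertion of the explicit formula) is correct and reproduces exactly the corrected identity the paper cites, and your treatment of the zeros with $|\gamma_\rho|\le T_0$ is sound: on the critical line $\rho(1-\rho)=\tfrac14+\gamma_\rho^2>0$, and since each conjugate pair of zeros (wherever located) contributes a positive real part to $\sum_\rho 1/(\rho(1-\rho))$, the truncated sum is indeed at most $2+\gamma-\log(4\pi)<0.047$. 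Two small points should be made explicit: that positivity step (your displayed inequality jumps from the truncated sum to the full sum without it), and the justification for integrating the conditionally convergent sum $\sum_\rho t^{\rho}/\rho$ termwise against $dt/t^2$ (harmless here because the resulting sum $\sum_\rho|\rho(1-\rho)|^{-1}$ converges absolutely, but it needs a sentence).

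The genuine gap is the branch $E(x)$. You correctly identify the mechanism --- Kadiri's zero-free region with $R=5.69693$ for $|\gamma_\rho|>T_0$, a split at a parameter height, and the Riemann--von Mangoldt count --- but then assert that ``the resulting bound is precisely $E(x)$''. Producing the specific shape $\frac{1+2\sqrt{(\log x)/R}}{2\pi}\exp\bigl(-2\sqrt{(\log x)/R}\bigr)$, the constant $1.75\cdot 10^{-12}$ on the range $x<2R\log^2 T_0$, and that threshold itself requires actually carrying out the optimization (the saddle height $\log T\approx\sqrt{(\log x)/R}$) with explicit zero-counting constants and comparing the optimal height with $\log T_0$; this computation is exactly the content of \cite[\S 5]{Ramare2013} that the paper invokes, and your proposal neither performs it nor cites it, so the lemma's stated constants are not established as written. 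Everything before that point is fine; to close the argument you must either reproduce that explicit calculation or quote the corrected result, as the paper does.
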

\begin{proof}
  As discussed by Chirre, Simoni\v{c}, and Val\aa s Hagenin in
  \cite{CSV}, by fixing a couple of small typos, Lemma 2.2 in
  \cite{Ramare2013} can be replaced by
    \begin{align*}
        &\sum_{n\leq x}\frac{\Lambda(n)}{n}=\log x-\gamma+\frac{\psi(x)-x}{x}-\sum_{\rho}\frac{x^{\rho-1}}{\rho(\rho-1)}\\
        &\qquad\qquad\qquad\qquad\qquad\qquad\qquad\qquad+\int_x^\infty\frac{\log 2\pi+\frac{1}{2}\log(1-u^{-2})}{u^2}\mathrm{d}u.
    \end{align*}
    Following \cite[\S 5]{Ramare2013}, we have
    \begin{equation*}
        \left|\sum_{\rho}\frac{x^{\rho-1}}{\rho(\rho-1)}\right|\leq \frac{0.047}{\sqrt{x}}+E(x).
    \end{equation*}
    Finally, since $x\geq 71$,
    \begin{align*}
        \left|\int_x^\infty\frac{\log 2\pi+\frac{1}{2}\log(1-u^{-2})}{u^2}\mathrm{d}u\right|&\leq\frac{\log(2\pi)}{x}+\frac{|\log(1-71^{-2})|}{2x}\\
        &\leq\frac{\log(2\pi)+10^{-4}}{x}.\qedhere
    \end{align*}
\end{proof}
\begin{lem}
  \label{psitildeasymp}
  We have
  \begin{align}
  %  \tilde\psi(X)=
    \sum_{n\le
      x}\Lambda(n)/n
    &=\Log x-\gamma+\Oc^*(1.3/\log^2x), \qquad x> 1,\label{lambdalog2}\\
    \sum_{n\le
      x}\Lambda(n)/n
    &=\Log x-\gamma+\Oc^*(1/\sqrt{x}), \qquad 1\leq x\leq 10^{19}.\label{lambdasqrt}
  \end{align}
\end{lem}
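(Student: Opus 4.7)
My plan is to combine Lemma~\ref{Ramcorrected} with the explicit bounds on $\psi(x)-x$ from Lemmas~\ref{appLambda}--\ref{appLambda3}, and to verify the residual small-$x$ ranges by direct computation at the prime powers.

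For \eqref{lambdalog2} I would split at $x=10^5$. For $x\ge 10^5$, Lemma~\ref{appLambda} gives $|\psi(x)-x|/x\le 0.64673/\log^2 x$, and Lemma~\ref{Ramcorrected} then reduces the claim to
\[
\frac{0.047}{\sqrt{x}}+\frac{\log(2\pi)+10^{-4}}{x}+E(x)\le\frac{0.65327}{\log^2 x},
\]
a routine pointwise comparison once $x$ is large enough that $E(x)$ has started to decay. For $1<x<10^5$, $\sum_{n\le x}\Lambda(n)/n$ is a right-continuous step function, so it suffices to verify the inequality at each of the (finitely many) prime-power jumps and immediately before them.

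For \eqref{lambdasqrt} I would use Lemma~\ref{appLambda2} on $71\le x\le 10^{19}$ to bound $|\psi(x)-x|/x\le 0.94/\sqrt{x}$. Substituting into Lemma~\ref{Ramcorrected}, the claim reduces to
\[
\frac{\log(2\pi)+10^{-4}}{x}+E(x)\le\frac{0.013}{\sqrt{x}}.
\]
On the subrange $71\le x<2R\log^2 T_0$ where $E(x)$ is the tiny constant $1.75\cdot 10^{-12}$, this reduces essentially to $\sqrt{x}\ge(\log(2\pi)+10^{-4})/0.013\approx 141$; the remainder of this subrange and the initial interval $1\le x<71$ are handled by direct verification at prime powers.

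The main obstacle is the range in which $E(x)$ has transitioned to its exponential-decay regime but has not yet become small. A quick check shows the generic bound on $E(x)$ can be too weak to be absorbed into either $1/\log^2 x$ or $1/\sqrt{x}$ at moderate $x$, so one must either sharpen the zero-sum estimate in Lemma~\ref{Ramcorrected} using numerically-verified RH up to the same height that underlies Lemmas~\ref{appLambda2}--\ref{appLambda3}, or treat these intermediate ranges directly by finer numerical or partial-summation arguments using the tail integral $\int_x^\infty(\psi(t)-t)/t^2\,dt$.
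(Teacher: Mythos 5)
Your plan is the same as the paper's: feed the bounds of Lemmas~\ref{appLambda} and~\ref{appLambda2} into Lemma~\ref{Ramcorrected} for $x\ge 10^5$, and settle the remaining small range by direct computation; your remark that it suffices to check at each prime-power jump and immediately before it is the right criterion, since between jumps the error is monotone and the majorants $1.3/\log^2x$, $1/\sqrt{x}$ are decreasing.

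The ``main obstacle'' you flag is not a real one: it stems from reading the case split in Lemma~\ref{Ramcorrected} literally as a split on $x$. The constant $1.75\cdot10^{-12}$ comes from bounding the zeros of ordinate above $T_0$ trivially (via $x^{\beta-1}\le 1$), so that branch is valid for every $x\ge1$; the exponential expression is only the \emph{better} bound once $\log x$ (not $x$) exceeds $2R\log^2T_0\approx 9.3\cdot10^{3}$, i.e.\ for $x$ beyond roughly $e^{9270}$. Under the literal reading you adopted one would have $E(10^5)\approx 0.036\gg 1/\sqrt{10^5}$ and both your argument and the paper's would collapse; indeed the paper's own proof of Lemma~\ref{appLambdatilde1} treats $E(u)$ as negligible up to $u=e^{500}$, which only makes sense with the threshold on $\log x$. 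With that reading, $E(x)=1.75\cdot10^{-12}$ throughout $[1,10^{19}]$ and far beyond, so your pointwise comparisons for \eqref{lambdasqrt} and for \eqref{lambdalog2} on $x\ge10^5$ go through at once, while in the genuine exponential regime $\log x\ge 2R\log^2T_0$ one has $E(x)\le 10^{-30}$, far below $0.65/\log^2x$. Your fallback suggestion --- exploit RH verified up to the height $T_0$ --- is exactly what the decomposition $0.047/\sqrt{x}+E(x)$ already encodes, so no new ingredient is needed; with this clarification your proposal is essentially the paper's proof.
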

\begin{proof}
    Using Lemma \ref{Ramcorrected} with the bounds from Lemmas \ref{appLambda} and \ref{appLambda2}, we obtain that,
    \begin{equation*}
        \sum_{n\leq x}\frac{\Lambda(n)}{n}=\log x-\gamma+O^*\left(\frac{0.67}{\log^2 x}\right)
    \end{equation*}
    for $x\geq 10^5$, and
    \begin{equation*}
        \sum_{n\leq x}\frac{\Lambda(n)}{n}=\log x-\gamma+O^*\left(\frac{1}{\sqrt{x}}\right)
    \end{equation*}
    for $10^5\leq x\leq 10^{19}$. We then extend these estimates to smaller values of $x$ by direct computation, giving \eqref{lambdalog2} and \eqref{lambdasqrt}.
\end{proof}
%%%%%%%%%%%%%%%%%%%%%%%%%%%%%%%%%%

An immediate consequence of this result is as follows.
%%%%%%%%%%%%%%%%%%%%%
\begin{lem}
  \label{appLambdatilde}
  We have
  \begin{align*}
    \sum_{n\le
      x}\Lambda(n)/n
    \le\Log x-0.545, \qquad x\geq 10^3,\\
    \sum_{n\le
      x}\Lambda(n)/n
    \ge\Log x-0.576, \qquad x\geq 10^{6}.
  \end{align*}
\end{lem}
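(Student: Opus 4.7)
The plan is to deduce both inequalities directly from Lemma \ref{psitildeasymp}, which furnishes two asymptotics: $\sum_{n\leq x}\Lambda(n)/n = \log x - \gamma + \Oc^*(1.3/\log^2 x)$ valid for $x>1$, and the sharper $\sum_{n\leq x}\Lambda(n)/n = \log x - \gamma + \Oc^*(1/\sqrt{x})$ valid for $1\leq x\leq 10^{19}$. Since $\gamma = 0.57721\ldots$, each of the claimed inequalities reduces to a check that the explicit error lies within a small tolerance relative to $\gamma$.

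First, for the upper bound on $x\geq 10^3$, I would apply the first asymptotic: the bound $\sum_{n\leq x}\Lambda(n)/n \leq \log x - 0.545$ follows once $1.3/\log^2 x \leq \gamma - 0.545 \approx 0.0322$. A direct computation at the threshold gives $1.3/(3\log 10)^2 \approx 0.0272$, comfortably within tolerance. Monotonicity of $1/\log^2 x$ then extends the inequality to all $x\geq 10^3$ at once.

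Next, for the lower bound on $x\geq 10^6$, the natural strategy is to split the range and use the $1/\sqrt{x}$ estimate on $[10^6,10^{19}]$ together with $1.3/\log^2 x$ for $x > 10^{19}$. The error is largest at the left endpoint $x = 10^6$, where $1/\sqrt{x} = 10^{-3}$; at $x = 10^{19}$ the cross-over error $1.3/\log^2 10^{19} \approx 6.8\cdot 10^{-4}$ is already smaller, and decreases thereafter. A numerical comparison at each endpoint against the tolerance $\gamma - 0.576 \approx 1.2\cdot 10^{-3}$, combined with the monotonicity of both error functions, then secures the claim on all of $[10^6,\infty)$.

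The main obstacle lies in the tightness of the lower bound: the tolerance $\gamma - 0.576$ is of the same order as the error $1/\sqrt{10^6} = 10^{-3}$, so the constants in Lemma \ref{psitildeasymp} must be tracked with care. By contrast, the upper bound has a substantial margin and presents no real difficulty.
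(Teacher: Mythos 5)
Your proposal is correct and matches the paper's (implicit) argument: the paper states Lemma \ref{appLambdatilde} as an immediate consequence of Lemma \ref{psitildeasymp}, and your derivation---comparing $1.3/\log^2 x$ at $x=10^3$ with $\gamma-0.545$ for the upper bound, and $1/\sqrt{x}$ on $[10^6,10^{19}]$ plus $1.3/\log^2 x$ beyond $10^{19}$ with $\gamma-0.576$ for the lower bound, using monotonicity of the error terms---is exactly the intended verification, with the numerical checks done correctly.
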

%%%%%%%%%%%%%%%%%%%%

%%%%%%%%%%%%%%%%%%%%%%
%%%%%%%%%%%%%%%%%%%%%%
We now examine the weighted average of $\sum_{n\leq u} \Lambda(n)/n$.
%%%%%%%%%%%%%%%%%%%%%
\begin{lem}
  \label{appLambdatilde1}
  We have
  \begin{equation*}
    \int_1^\infty\biggl|\sum_{n\le u}\frac{\Lambda(n)}{n}-\log u+\gamma\biggr|\frac{du}{u}
    \le
    0.411.
  \end{equation*}
\end{lem}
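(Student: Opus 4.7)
Write $g(u):=\sum_{n\le u}\Lambda(n)/n-\log u+\gamma$, so the goal is to bound $\int_1^\infty|g(u)|\,du/u$. The plan is to split this integral at two thresholds $1<A<10^{19}$ and use a different bound on each of the three resulting pieces.

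On the tail $[10^{19},\infty)$, the asymptotic bound \eqref{lambdalog2} gives
$$\int_{10^{19}}^\infty\frac{|g(u)|}{u}\,du\le\int_{10^{19}}^\infty\frac{1.3}{u(\log u)^2}\,du=\frac{1.3}{19\log 10}\approx 0.030.$$
On the middle range $[A,10^{19}]$, the bound \eqref{lambdasqrt} gives
$$\int_A^{10^{19}}\frac{|g(u)|}{u}\,du\le\int_A^{10^{19}}u^{-3/2}\,du\le\frac{2}{\sqrt A}.$$

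For the head range $[1,A]$, direct evaluation is needed. Since $\tilde\psi(u):=\sum_{n\le u}\Lambda(n)/n$ is constant on each half-open unit interval $[n,n+1)$ (call this common value $T_n$), we have $g(u)=T_n+\gamma-\log u$ there, a function strictly decreasing in $u$ with at most one zero, located at $u_n=e^{T_n+\gamma}$. Hence $\int_n^{n+1}|g(u)|/u\,du$ admits a closed-form expression in terms of the antiderivative $(T_n+\gamma)\log u-\tfrac12(\log u)^2$, evaluated at $n$, $n+1$, and (when $u_n\in(n,n+1)$) at $u_n$. Summing over $n=1,\dots,A-1$ yields an exact value for the head piece.

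Choosing $A$ moderately large (say $A=10^4$) makes the middle and tail contributions together at most $2/\sqrt A+1.3/(19\log 10)\approx 0.05$, so the head must contribute at most $\approx 0.36$. The dominant single contribution is from $[1,2]$, which equals $\gamma^2-\gamma\log 2+\tfrac12(\log 2)^2\approx 0.173$; successive intervals contribute progressively less, consistent with the pointwise decay $|g(u)|\lesssim 1/\sqrt u$. The main obstacle is the head computation itself: although each unit-interval piece is elementary, summing many of them rigorously — and verifying that the total, plus the two asymptotic pieces, stays below $0.411$ — requires careful numerical work, best handled by certified computer arithmetic.
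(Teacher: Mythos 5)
Your decomposition (exact head computation using the piecewise-constant structure of $\tilde\psi$, a $u^{-3/2}$ bound in a middle range via \eqref{lambdasqrt}, and \eqref{lambdalog2} on the tail) is structurally the same as the paper's, but the numerical budget does not close, and the missing ingredient is precisely the step the paper inserts between $10^{19}$ and a much larger cutoff. Applying \eqref{lambdalog2} from $u=10^{19}$ onwards costs $1.3/(19\log 10)\approx 0.030$ all by itself. That would be affordable only if the head were as small as you hope, but it is not: the exact computation of $\int_1^{10^6}|\Delta(u)|\,du/u$ gives about $0.408$ (this is essentially sharp, since the paper remarks the true value of the whole integral is close to $0.41$ and everything beyond $10^6$ is tiny in reality), and the portion between $10^4$ and $10^6$ contributes only a few thousandths. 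Hence $\int_1^{10^4}|\Delta(u)|\,du/u$ is already about $0.39$--$0.40$, not ``at most $\approx 0.36$''. With your choices the three pieces total roughly $0.40+0.02+0.03\approx 0.45$, and even computing the head all the way to $10^6$ (so the middle term drops to $2/\sqrt{10^6}=0.002$) you still get about $0.408+0.002+0.030>0.411$.

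The fix, which is what the paper does, is to interpose a fourth range $[10^{19},x_1]$ and there combine Lemma~\ref{Ramcorrected} with the bound $|\psi(x)-x|\le 1.994\cdot 10^{-8}x$ of Lemma~\ref{appLambda3}, so that on this range $|\Delta(u)|\le 2\cdot 10^{-8}+0.05u^{-1/2}$ (up to negligible terms); this costs only about $2\cdot10^{-8}\log(x_1/10^{19})$ and postpones the use of \eqref{lambdalog2} to $u\ge x_1$, where it contributes $1.3/\log x_1$ (about $0.003$ for $x_1=e^{500}$) instead of $0.030$. Your head computation itself (closed form on each $[n,n+1)$ with the zero at $e^{\tilde\psi(n)+\gamma}$, value $\gamma^2-\gamma\log 2+\tfrac12(\log 2)^2$ on $[1,2]$) is correct and matches the paper; the gap is that without the Lemma~\ref{appLambda3} step the tail is an order of magnitude too large for the stated constant $0.411$.
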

%%%%%%%%%%%%%%%%%%%%
This integral may be of interest in its own right. While the true
value of this integral seems close to $0.41$, we have no idea of the
conjectured limiting value of the integral. To this end, see a similar problem discussed in
\cite{Brent}.
%%%%%%%%%%%%%%%%%%%%%%
\begin{proof}
  We define $\Delta(u)=\sum_{n\le u}\Lambda(n)/n-\log
    u+\gamma$.
  When the variable $u$ is small, we compute directly by using the
  fact that $\tilde\psi(u)$ is constant on $[n,n+1)$ and that, with
  $\tau=\tilde\psi(n)+\gamma$, the integral 
  $\int_n^{n+1}|\Delta(u)|{du}/{u}$ is equal to
  \begin{equation*}
    \begin{cases}
      \frac{\log^2(n+1)-\log^2n}{2}-\tau\log\frac{n+1}{n}
      &\text{when $\tau\le \log n$},\\
      \frac{\log^2(n+1)+\log^2n-2\tau^2}{2}+\tau(2\tau-\log(n^2+n)
      &\text{when $\log n<\tau< \log (n+1)$},\\
      -\frac{\log^2(n+1)-\log^2n}{2}+\tau\log\frac{n+1}{n}
      &\text{when $\tau\ge \log(n+1)$}.
    \end{cases}
  \end{equation*}
  The second case is treated by splitting the integral at $u=e^\tau$.
  We compute in this manner that
  \begin{equation*}
    \int_{1}^{10^6}|\Delta(u)|\frac{du}{u}
    \le 0.408.
  \end{equation*}
  We use Lemma~\ref{psitildeasymp} to infer that
  \begin{equation*}
    \int_{10^6}^{10^{19}}|\Delta(u)|\frac{du}{u}
    \le \int_{10^6}^{10^{19}}\frac{1}{u^{3/2}}du\le
    \frac{2}{\sqrt{10^6}}
    =\frac{2}{1000}=0.002.
  \end{equation*}
  We now use Lemma \ref{appLambda3} and Lemma \ref{Ramcorrected} to show that, for some $x_{1}\geq 10^{19}$,
  \begin{align*}
  \int_{10^{19}}^{x_{1}} |\Delta(u)| \frac{du}{u} &\leq\int_{10^{19}}^{x_{1}} \left( \frac{2\cdot 10^{-8}}{u} + \frac{0.05}{u^{3/2}}\right)\, du\\
  &= 2\cdot 10^{-8}(\log x_{1} - 19\log 10) + \frac{0.2}{\sqrt{10^{19}}} - \frac{0.2}{\sqrt{x_{1}}}.
  \end{align*}
To handle the integration beyond $x_{1}$ we use \eqref{lambdalog2} in Lemma \ref{psitildeasymp}, whence the total integral is
$$ 0.408 + 0.002 + 2\cdot 10^{-8}(\log x_{1} - 19\log 10) + \frac{0.2}{\sqrt{10^{19}}} - \frac{0.2}{\sqrt{x_{1}}} + \frac{1.3}{\log x_{1}}.$$
Choosing $x_{1} = \exp(500)$ gives the result.
%  \begin{equation*}
   % \int_{10^{10}}^{U_0}|\Delta(u)|\frac{du}{u}
    %\le 0.0067\int_{10^{10}}^{U_0}\frac{du}{u\log u}
    %=0.0067(\log\log U_0-\log\log 10^{10})\le 0.0166.
 % \end{equation*}
 % Finally, we have
  %\begin{equation*}
 %   \int_{U_0}^{\infty}|\Delta(u)|\frac{du}{u}
   % \le 1.833\int_{U_0}^{\infty}\frac{du}{u\log^2 u}
   % =\frac{1.833}{\log U_0}\le 0.327.
  %\end{equation*}
 % We gather these bounds to infer the result.
\end{proof}
%%%%%%%%%%%%%%%%%%%%%%
We remark that we could further divide the range to use more entries
in the tables in \cite{BKLNW}, but the above result is sufficient for
our purposes.

%%%%%%%%%%%%%%%%%%%%%%%%%%%%%%%%%%%%%%%%%%%%%%%%%%%
\section{Character sum estimates}\label{captain33}
%%%%%%%%%%%%%%%%%%%%%%%%%%%%%%%%%%%%%%%%%%%%%%%%%%%
The work of Stephens and Pintz relied on
the Burgess bound from~\cite{Burgess*62}. Explicit versions of this are
known but are still numerically rather weak. When the modulus is
prime, such bounds have been provided by Francis \cite{Francis*21} improving on work by Trevi\~{n}o
\cite{Trevino*15-2} and McGown \cite{McGown*12}. If we
restrict our attention here to quadratic characters to prime modulus
congruent to $1$ modulo~$4$, we may rely on the slightly stronger
bounds of Booker in \cite{Booker*06}.
Recently, Jain-Sharma, Khale and Liu have produced in
\cite{Jain-Sharma-Khale-Liu*21} an explicit version of the Burgess
inequality for a composite modulus, but only for $q\le\exp\exp(9.6)$.

Instead of the Burgess bound we shall rely on versions of the P\'{o}lya--Vinogradov
inequality. We first require an explicit version of the P\'{o}lya--Vinogradov
inequality due to Frolenkov and Soudararajan in~\cite[Corollary
1]{FS}. In both lemmas that follow, we let $V$ denote the bound on the
character sum. We shall, depending on the conditions, invoke these
bounds for $V$ later in the paper.
%%%%%%%%%%%%%%%%%%%%%%%%
\begin{lem}
  \label{PV}
  When $q\ge100$ and $\chi$ is a non-principal Dirichlet character modulo~$q$, we have
  \begin{equation*}
    \biggl|\sum_{A\le n\le B}\chi(n)\biggr|
    \le\frac{1}{\pi\sqrt{2}}\sqrt{q}(\log q+6)+\sqrt{q}=V.
  \end{equation*}
\end{lem}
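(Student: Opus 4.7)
The statement is essentially a restatement of Corollary~1 of Frolenkov and Soundararajan~\cite{FS}, whose explicit bound for primitive characters supplies the leading term $\sqrt{q}(\log q+6)/(\pi\sqrt{2})$, with the extra $\sqrt{q}$ absorbing the passage from primitive to general non-principal $\chi$. My plan is therefore to indicate the three-step derivation rather than to rework their argument from scratch.

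First I would reduce to the primitive case: writing $\chi$ as induced from a primitive character $\chi^{\star}$ modulo $q^{\star}\mid q$, I would unfold using M\"obius inversion over the divisors $d$ of $q/q^{\star}$, so that
\[
\sum_{A\le n\le B}\chi(n)=\sum_{d\mid q/q^{\star}}\mu(d)\chi^{\star}(d)\sum_{A/d\le m\le B/d}\chi^{\star}(m).
\]
Each inner sum is a character sum for $\chi^{\star}$ over a shorter interval, so the primitive case applied termwise together with a crude summation over $d$ shows that the $d>1$ contributions total at most $O(\sqrt{q})$, comfortably fitting inside the $+\sqrt{q}$ of $V$ once $q\ge 100$.

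Second, for $\chi^{\star}$ primitive modulo $q^{\star}$, I would apply the Gauss sum identity
\[
\chi^{\star}(n)=\frac{1}{\tau(\overline{\chi^{\star}})}\sum_{a=1}^{q^{\star}-1}\overline{\chi^{\star}}(a)\,e^{2\pi ian/q^{\star}},\qquad |\tau(\chi^{\star})|=\sqrt{q^{\star}},
\]
to rewrite the partial sum as $\tau(\overline{\chi^{\star}})^{-1}\sum_a \overline{\chi^{\star}}(a)T(a)$, where $T(a)$ is a geometric sum in $n$ satisfying $|T(a)|\le |2\sin(\pi a/q^{\star})|^{-1}$. The classical P\'olya--Vinogradov constant $1/\pi$ falls out by majorising $|2\sin(\pi a/q^{\star})|^{-1}$ by $q^{\star}/(2a)$ near $a=0$ and $a=q^{\star}$ and summing.

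The Frolenkov--Soundararajan refinement gaining the extra factor $1/\sqrt{2}$ is obtained by pairing $a$ with $q^{\star}-a$, exploiting $T(q^{\star}-a)=\overline{T(a)}$ and $\chi^{\star}(-1)=\pm1$, and applying Cauchy--Schwarz so that the bound is reduced to controlling $\sum_a|2\sin(\pi a/q^{\star})|^{-2}$, whose square root improves on the $\ell^1$-bound by a factor of $\sqrt{2}$. The main obstacle in this approach is not analytic but one of bookkeeping: confirming that the constants $(1/(\pi\sqrt{2}),6,1)$ of \cite[Corollary~1]{FS} match those in the statement, and that the imprimitivity residue from step one genuinely fits within the fixed budget of $\sqrt{q}$ once $q\ge 100$.
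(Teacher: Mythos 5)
Your identification of the source is exactly what the paper does: Lemma~\ref{PV} is quoted verbatim from \cite[Corollary 1]{FS} and the paper supplies no proof of its own, so invoking that corollary is the intended justification and, at that level, your proposal matches the paper.

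The attached sketch, however, misallocates the constants, and the one quantitative claim it rests on would fail if you actually tried to carry the reduction out. Frolenkov--Soundararajan's explicit bounds for \emph{primitive} characters have leading constants $2/\pi^2$ (even) and $1/(2\pi)$ (odd) --- these are the ones recorded in Lemma~\ref{PVbis} --- and both are strictly smaller than $1/(\pi\sqrt2)$; the larger constant and the generous $+6$ in Corollary~1 are there precisely to pay for the passage from a general non-principal $\chi$ to the inducing primitive $\chi^{\star}$ modulo $q^{\star}$. That loss is multiplicative in the main term, not an additive $O(\sqrt q)$: in your M\"obius unfolding, already the single divisor $d=2$ (when $2\mid q/q^{\star}$ and $2\nmid q^{\star}$) contributes a term of size roughly $\tfrac{1}{2\pi}\sqrt{q/2}\,\log(q/2)$, which is of order $\sqrt q\log q$ and cannot be ``comfortably absorbed'' into the $+\sqrt q$; more generally the sum over squarefree $d\mid q/q^{\star}$ coprime to $q^{\star}$ multiplies the primitive bound by $2^{\omega}$, and $2^{\omega(k)}$ can be as large as about $1.63\sqrt{k}$ (e.g. $k=6$), so the whole divisor sum inflates the coefficient of $\sqrt q\log q$ rather than the secondary term. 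Closing this step requires the more careful bookkeeping done in \cite{FS} (keeping track of $\log q^{\star}$ versus $\log q$ and of the small-modulus ranges down to $q\ge 100$), or else one simply cites their Corollary~1, as the paper does; as written, the assertion that the $d>1$ contributions total $O(\sqrt q)$ is false, and the description of $\sqrt q(\log q+6)/(\pi\sqrt2)$ as the primitive-case bound is a misattribution.
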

%%%%%%%%%%%%%%%%%%%%%%%%
The following is from \cite{Lapkova*16,Lapkova} by Lapkova, which makes a small improvement
on the earlier result from \cite[Theorem 2]{FS} by Frolenkov and Soundararajan.
%%%%%%%%%%%%%%%%%%%%%%%%
\begin{lem}
  \label{PVbis}
  When $q>1$ and $\chi$ is a primitive Dirichlet character modulo $q$, we have
  \begin{equation*}
    \biggl|\sum_{A\le n\le B}\chi(n)\biggr|
    \le
    \begin{cases}
      \frac{2}{\pi^2}\sqrt{q}\log q+0.9467\sqrt{q} + 1.668=V&\text{when $\chi(-1)=1$},\\
      \frac{1}{2\pi}\sqrt{q}\log q+0.8204\sqrt{q} + 1.0286=V&\text{when $\chi(-1)=-1$}.
    \end{cases}
  \end{equation*}
  When $A=0$ and $\chi$ is even, we may divide this bound by~2.
\end{lem}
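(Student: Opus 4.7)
The plan is to follow the classical Fourier/Gauss-sum derivation of Pólya--Vinogradov while tracking every constant. Since $\chi$ is primitive modulo $q$, one starts from
\begin{equation*}
\chi(n) = \frac{1}{\tau(\bar\chi)}\sum_{a=1}^{q-1}\bar\chi(a)\,e^{2\pi i an/q},\qquad |\tau(\bar\chi)|=\sqrt{q},
\end{equation*}
and sums over $A\le n\le B$. After interchanging, the inner sum is a geometric progression bounded by $\min(B-A+1,|2\sin(\pi a/q)|^{-1})$, so the whole bound reduces to estimating a weighted cosecant sum
\begin{equation*}
\frac{1}{\sqrt q}\sum_{a=1}^{q-1}\frac{|\bar\chi(a)|}{|2\sin(\pi a/q)|},
\end{equation*}
modulated by the signs coming from $\bar\chi$.

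The next step is to exploit the parity of $\chi$ via the relation $\bar\chi(q-a)=\chi(-1)\bar\chi(a)$. Pairing $a$ with $q-a$ in the Fourier expansion converts the exponential $e^{2\pi ian/q}$ into $2\cos(2\pi an/q)$ when $\chi$ is even and $2i\sin(2\pi an/q)$ when $\chi$ is odd; these two cases unfold the cosecant sum differently and are ultimately responsible for the two distinct leading constants $2/\pi^2$ and $1/(2\pi)$. In each case I would apply partial summation against the cosecant, combined with the Pólya device of replacing the sharp indicator of $[A,B]$ by its smoothed Fourier expansion (essentially $\int_A^B e^{2\pi iat/q}\,dt$), to extract the main term $\sqrt q\log q$ with its sharp numerical coefficient. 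The standard asymptotic
\begin{equation*}
\sum_{a=1}^{(q-1)/2}\frac{1}{a}=\log q+\gamma-\log 2+O(1/q)
\end{equation*}
would be the workhorse here, the different leading constants arising from whether one is integrating $\csc$ or a cosine-weighted version of it against the saw-tooth sum.

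The real obstacle is securing the explicit lower-order constants $0.9467,\, 0.8204,\, 1.668,\, 1.0286$: they are not robust to casual estimation. To get them one has to write $\csc(\pi x/q)=q/(\pi x)+g(x/q)$ with an explicit bound for the smooth remainder $g$, evaluate the $q/(\pi x)$ piece to high precision via Euler--Maclaurin, and track boundary terms at $a=1$ and $a=(q-1)/2$ with care. The optimal split between the two bounds $B-A+1$ and $|2\sin(\pi a/q)|^{-1}$ in the geometric-sum estimate must also be chosen tightly, and the dependence on $A,B$ eliminated by maximising only over the worst case. Finally, the halving when $A=0$ and $\chi$ is even is a free consequence of the symmetry $\chi(n)=\chi(q-n)$: the sum over $[0,B]$ equals half the palindromic sum over $[-B,B]\bmod q$, whose Fourier expansion keeps only real parts and so gains the extra factor of $2$.
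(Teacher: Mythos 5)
Your outline follows the right classical lineage, but it is a plan rather than a proof, and the gap is not only in the secondary constants you flag. Note first that the paper itself does not prove this lemma: it is quoted from Lapkova, who sharpens the lower-order terms in Theorem~2 of Frolenkov--Soundararajan, so "proving" it here really means reproducing that explicit computation. The concrete failure point in your sketch is the step where you bound the inner geometric sum by $\min(B-A+1,|2\sin(\pi a/q)|^{-1})$ and then estimate the resulting cosecant sum termwise with $\sum_{a\le (q-1)/2}1/a\approx\log q$: bounding the oscillating factor (a sine or a difference of cosines after the $a\leftrightarrow q-a$ pairing) by $1$ in absolute value yields a leading constant of size $1/\pi\approx 0.318$ for a general interval, not the claimed $2/\pi^2\approx 0.203$ (even) and $1/(2\pi)\approx 0.159$ (odd). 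Those refined constants require Landau's averaging device: one must use that $|\sin(2\pi a\alpha)|$ has mean value $2/\pi$ uniformly in $\alpha$, which Frolenkov--Soundararajan implement via the Fourier expansion of $|\sin(\pi t)|$ (namely $|\sin(\pi t)|=\tfrac2\pi-\tfrac4\pi\sum_{k\ge1}\cos(2\pi kt)/(4k^2-1)$) together with a careful treatment of the resulting $\bar\chi$-twisted sums, and which Lapkova then tightens to obtain precisely $0.9467\sqrt q+1.668$ and $0.8204\sqrt q+1.0286$. Your phrase about "integrating $\csc$ or a cosine-weighted version of it against the saw-tooth sum" gestures at this but supplies no mechanism for the gain from $1/\pi$ down to $2/\pi^2$ or $1/(2\pi)$, and without it the stated inequality is simply not reached; likewise the explicit additive constants, which you concede you do not derive, are the whole content of the lemma as used later (they feed into $V$ and hence into all the numerics of Theorems 1 and 2).

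One part of your argument is sound and worth keeping: the halving when $A=0$ and $\chi$ is even does follow from the symmetry $\chi(-n)=\chi(n)$, since then $\sum_{0\le n\le B}\chi(n)=\tfrac12\sum_{-B\le n\le B}\chi(n)$ and the general interval bound applies to $[-B,B]$. But the main inequalities themselves cannot be recovered by the termwise estimates you describe; you would need to carry out (or cite) the Frolenkov--Soundararajan/Lapkova analysis in full.
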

%%%%%%%%%%%%%%%%%%%%%%%%
%Here is a consequence of the
%result of Trevi\~{n}o (case $r=2$).
%%%%%%%%%%%%%
%\begin{lem}
  %\label{Booker}
 % Let $p\ge 10^{7}$ be a prime number and $\chi$ be a non-principal character modulo~$q$. Let $M$ and $N$ be real numbers with $N\in[1,2p^{5/8}]$. We have
%  \begin{equation*}
  %  \biggl|\sum_{M< n\le M+N}\chi(n)\biggr|
   % \le
   % 3.75\sqrt{N}p^{3/16}(\log p)^{1/4}.
 % \end{equation*}
%\end{lem}
%%%%%%%%%%%%%
Here is a smoothed version of the P\'{o}lya--Vinogradov
that we take from Levin, Pomerance and Soundararajan in
\cite{Levin-Pomerance-Soundararajan*10}.
%%%%%%%%%%%%%%%
\begin{lem}
  \label{LPS}
  Let $\chi$ be a primitive Dirichlet
  character modulo~$q>1$. Let $M$ and $N$ be real numbers with
  $0<N\le q$. With $H(t)=\max(0,1-|t-1|)$, we have
  \begin{equation*}
    \biggl|\sum_{M\le n\le M+2N}\chi(n)
    H\biggl(\frac{n-M}{N}\biggr)
    \biggr|
    \le \sqrt{q}-\frac{N}{\sqrt{q}}.
  \end{equation*}
\end{lem}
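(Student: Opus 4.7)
My strategy is to combine the Gauss-sum decomposition of $\chi$ with the self-convolution structure of the triangle $H=\mathbf{1}_{[0,1]}\ast\mathbf{1}_{[0,1]}$, and to conclude by a Fej\'er kernel identity.

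First, I substitute the Gauss-sum inversion $\chi(n)=\tau(\bar\chi)^{-1}\sum_{a\bmod q}\bar\chi(a)e(an/q)$ (with $|\tau(\bar\chi)|=\sqrt q$) into the left-hand side, call it $S$, and swap the order of summation. The $a=0$ term drops because $\bar\chi(0)=0$, so
\[
S=\frac{1}{\tau(\bar\chi)}\sum_{a=1}^{q-1}\bar\chi(a)\,G(a),\qquad G(a):=\sum_{n}H\bigl((n-M)/N\bigr)e(an/q),
\]
and the triangle inequality on $\bar\chi$ reduces the statement to proving
$\sum_{a=1}^{q-1}|G(a)|\le q-N$.

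Next, I exploit the self-convolution identity $H((n-M)/N)=N^{-1}\int_0^N\mathbf{1}_{[M+s,\,M+s+N]}(n)\,ds$. Swapping sum and integral turns $NG(a)$ into the integral over $s\in[0,N]$ of a geometric character sum $\sigma(s,a)=\sum_{M+s\le n\le M+s+N}e(an/q)$, whose integrand is piecewise constant in $s$ with jumps occurring where $M+s$ or $M+s+N$ hits an integer. Partitioning $[0,N]$ at these finitely many jumps, summing the resulting geometric pieces, and absorbing the endpoint contributions via the elementary inequality $|\delta+(1-\delta)e(\theta)|\le 1$ for $\delta\in[0,1]$, I obtain the Fej\'er-type bound
\[
|G(a)|\le\frac{\sin^2(\pi Na/q)}{N\sin^2(\pi a/q)}=:K_N(a/q).
\]

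Finally, I appeal to the standard Fej\'er kernel identity: when $N$ is a positive integer $\le q$, $K_N(x)=\sum_{|k|<N}(1-|k|/N)e(kx)$ is a trigonometric polynomial of degree $<N$, so discrete orthogonality gives $\sum_{a\bmod q}K_N(a/q)=q$; subtracting $K_N(0)=N$ leaves $\sum_{a=1}^{q-1}K_N(a/q)=q-N$. Dividing by $\sqrt q$ then yields $|S|\le(q-N)/\sqrt q=\sqrt q-N/\sqrt q$. The main obstacle I anticipate is twofold: the careful case-splitting needed in the derivation of $|G(a)|\le K_N(a/q)$ (tracking $\lceil M+s\rceil$ and $\lfloor M+s+N\rfloor$ as $s$ traverses $[0,N]$), and extending the final summation identity to real $N$---which could be done either by a direct Poisson summation argument for $K_N$ on the reals or by a sandwich/continuity argument bracketing $N$ between $\lfloor N\rfloor$ and $\lceil N\rceil$.
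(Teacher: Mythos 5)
The paper does not actually prove this lemma: it is imported verbatim from Levin--Pomerance--Soundararajan, so your argument has to stand on its own. The Gauss-sum reduction and the identity $H((n-M)/N)=N^{-1}\int_0^N \mathbf{1}_{[M+s,\,M+s+N]}(n)\,ds$ are fine, and for \emph{integer} $N$ your plan can be made to work: on each piece of $[0,N]$ the sum $\sigma(s,a)$ is a geometric sum of constant modulus $|\sin(\pi Na/q)/\sin(\pi a/q)|$, and the $s$-integral is a convex combination of two complex numbers of that common modulus (this convexity fact, not the inequality $|\delta+(1-\delta)e(\theta)|\le 1$ you quote, is what gives the squared kernel), whence $|G(a)|\le \sin^2(\pi Na/q)/(N\sin^2(\pi a/q))$. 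But this pointwise bound is \emph{false} for general real $N$, which is precisely the case the lemma covers: take $q=3$, $M=0$, $N=3/2$, $a=2$. Then $H(n/N)=2/3$ for $n=1,2$ and vanishes otherwise, so $G(2)=\tfrac23\bigl(e^{4\pi i/3}+e^{8\pi i/3}\bigr)=-\tfrac23$, whereas your claimed majorant is $\sin^2(\pi N a/q)/(N\sin^2(\pi a/q))=\sin^2(\pi)/(N\sin^2(2\pi/3))=0$. So the step you dismiss as ``careful case-splitting'' is exactly where the argument breaks, and neither proposed patch addresses it: the Fej\'er orthogonality is being applied to a bound that is no longer available, and a sandwich between $\lfloor N\rfloor$ and $\lceil N\rceil$ fails because the weight $H((n-M)/N)$ itself changes with $N$ (there is no monotonicity) and because the target $\sqrt q-N/\sqrt q$ is strictly smaller than what $\lfloor N\rfloor$ would deliver.

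The correct repair is your Poisson-summation suggestion, but pushed one step earlier, replacing the periodic kernel by the continuous one; this is essentially the proof in the cited paper. Writing $f(x)=H((x-M)/N)$, one has $\widehat f(\xi)=N e^{-2\pi i(M+N)\xi}\bigl(\tfrac{\sin(\pi N\xi)}{\pi N\xi}\bigr)^2$, so Poisson summation gives, for all real $M$ and $N$,
\begin{equation*}
  |G(a)|=\Bigl|\sum_{m\in\mathbb{Z}}\widehat f\bigl(m-\tfrac aq\bigr)\Bigr|
  \le N\sum_{m\in\mathbb{Z}}\Bigl(\frac{\sin(\pi N(m-a/q))}{\pi N(m-a/q)}\Bigr)^2 .
\end{equation*}
Summing over $a=1,\dots,q-1$ amounts to summing $N\bigl(\tfrac{\sin(\pi N\beta)}{\pi N\beta}\bigr)^2$ over $\beta\in\tfrac1q\mathbb{Z}\setminus\mathbb{Z}$. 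A second application of Poisson summation (the transform of $\bigl(\tfrac{\sin(\pi Nx)}{\pi Nx}\bigr)^2$ is the triangle $N^{-1}\max(0,1-|\xi|/N)$) gives $\sum_{k\in\mathbb{Z}}\bigl(\tfrac{\sin(\pi Nk/q)}{\pi Nk/q}\bigr)^2=q/N$ for $0<N\le q$; discarding the nonnegative terms with $q\mid k$, whose $k=0$ member equals $1$, yields $\sum_{a=1}^{q-1}|G(a)|\le N(q/N-1)=q-N$, and dividing by $|\tau(\bar\chi)|=\sqrt q$ finishes the proof. Your periodic kernel $K_N$ only reappears (and your original computation only becomes exact) when $N$ is an integer, which is why the integer case misleadingly confirms the plan.
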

%%%%%%%%%%%%%%%

%%%%%%%%%%%%%%%%
\begin{lem}
  \label{Yep}
  Let $\chi$ be a primitive Dirichlet
  character modulo~$q>1$. Let $M$ and $N$ be real numbers with
  $0<N\le q$. When $\chi$ is odd,we have
  \begin{equation*}
    \biggl|\sum_{M< n\le M+N}\chi(n)
    \biggr|
    \le \sqrt{2N}q^{1/4}+\sqrt{q}.
  \end{equation*}
  When $\chi$ is even, we have
  \begin{equation*}
    \biggl|\sum_{n\le N}\chi(n)
    \biggr|
    \le \sqrt{N}q^{1/4}+\tfrac12\sqrt{q}.
  \end{equation*}
\end{lem}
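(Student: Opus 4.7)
I would derive Lemma~\ref{Yep} from the smoothed P\'olya--Vinogradov bound of Lemma~\ref{LPS} via a shift-averaging argument combined with Cauchy--Schwarz (and in the even case a symmetrisation).

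The starting point is that the tent weight $H((n-M)/N)$ is the rescaled autoconvolution of $\1_{[0,1]}$, so Lemma~\ref{LPS} may be recast as
\[
\Bigl|\int_0^1 S(M+yN,N)\,dy\Bigr| \le \sqrt{q} - \frac{N}{\sqrt{q}},\qquad S(M,N):=\sum_{M<n\le M+N}\chi(n).
\]
This bounds the \emph{average} of $S$ over a one-parameter family of shifted intervals. To convert this into a pointwise bound on $|S(M,N)|$, I would combine it with the slow-variation estimate $|S(M+j,N)-S(M,N)|\le 2j$, which follows from the two boundary contributions of length at most $j$. Writing $K\cdot S(M,N)=\sum_{j=0}^{K-1}S(M+j,N)+O(K^2)$ and using Cauchy--Schwarz together with an $L^2$-mean bound for $S(\cdot,N)$ over a short interval of shifts derived from the tent estimate at scale $K$, one obtains an inequality of the form $|S(M,N)|^2\le N\sqrt{q}+O(K^2)$, and optimising $K$ yields $|S(M,N)|\le\sqrt{2N}q^{1/4}+\sqrt{q}$.

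For the even case, the symmetry $\chi(-n)=\chi(n)$ (together with $\chi(0)=0$) lets one write
\[
\sum_{n\le N}\chi(n)=\tfrac12\sum_{1\le|n|\le N}\chi(n).
\]
The right-hand side is a symmetric interval sum of length $2N$ centred at zero; applying the odd-case bound to this symmetric sum and dividing by $2$ gives the halved bound $\sqrt{N}q^{1/4}+\tfrac12\sqrt{q}$, in which the factor $\sqrt 2$ from the odd case disappears because of the exact centring.

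The main obstacle is extracting the sharp constants $\sqrt 2$ (odd) and $1$ (even) in the leading term together with the clean $\sqrt{q}$ and $\tfrac12\sqrt{q}$ additive errors. The negative correction $-N/\sqrt{q}$ in Lemma~\ref{LPS} is crucial here: it cancels the boundary contributions picked up when passing from the tent-smoothed sum to the unsmoothed interval sum, so one must use the precise LPS bound (not merely an $O(\sqrt{q})$ surrogate) throughout, and carry the balancing of the two terms in the LPS estimate through each Cauchy--Schwarz step.
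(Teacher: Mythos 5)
Your even-case symmetrisation (writing $\sum_{n\le N}\chi(n)=\tfrac12\sum_{1\le|n|\le N}\chi(n)$ and applying the interval bound to a length-$2N$ interval centred at $0$) is exactly the intended route, and your recasting of Lemma~\ref{LPS} as $\int_0^1 S(M+yN,N)\,dy$ is correct. The gap is in the core step for the general interval. Lemma~\ref{LPS} is a \emph{first-moment} statement: it bounds one fixed tent-weighted character sum, i.e.\ a single signed average of $S(\cdot,N)$ over shifts. It provides no mean-square information, so there is no ``$L^2$ mean bound for $S(\cdot,N)$ over a short interval of shifts'' to feed into Cauchy--Schwarz; that step has nothing to pair with, and in this direction Cauchy--Schwarz converts the quantity you want into a second moment you cannot control with the tools at hand. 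The target display $|S(M,N)|^2\le N\sqrt q+O(K^2)$ is also internally inconsistent with the conclusion: $K$ appears only in the error, so ``optimising $K$'' would mean taking $K$ bounded and would give $|S|\le \sqrt N q^{1/4}+O(1)$, which is stronger than the lemma and certainly not what this machinery yields; the claimed $\sqrt{2N}q^{1/4}+\sqrt q$ cannot be reconstructed from it.

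If you implement the shift-averaging honestly, $\sum_{j<K}S(M+j,N)$ is a character sum against a trapezoidal weight with ramps of width $K$, and the natural way to bound it from Lemma~\ref{LPS} (big tent minus a co-centred small tent) gives main terms $N\sqrt q/K + K$, hence only $2\sqrt N q^{1/4}$ after optimisation --- a loss of $\sqrt2$ against the stated bound. To recover the constant $\sqrt2$ one must instead decompose the sharp cutoff (equivalently, the trapezoid) into about $K\approx q^{-1/4}\sqrt{N/2}$ \emph{small} tents of width $2N/K$ spaced $N/K$ apart, apply Lemma~\ref{LPS} to each, and balance $K\sqrt q$ against $N/(2K)$; this is precisely the paper's proof, which also needs the trivial case $N\le\sqrt q$ separately (so that the $-N/\sqrt q$ term from LPS can absorb the $O(1)$ edge error) and a short analysis of the fractional part of $K$, which is where the additive $\sqrt q$ (and $\tfrac12\sqrt q$ after halving in the even case) comes from. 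Finally, note the interval bound you invoke for the symmetric sum must be stated for all primitive $\chi$, not just odd ones; this is harmless since the smoothing argument never uses the parity, but as written you are applying an ``odd-case'' bound to an even character.
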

%%%%%%%%%%%%%%%%

%%%%%%%%%%%%%
\begin{proof}
  We may assume that $M$ is an integer. Notice first that the lemma is
  trivial when $N\le \sqrt{q}$, so we may assume $N>\sqrt{q}$.
  Let $K\ge1$ be an integer and
  let $A=N/K$. Keeping the
  notation of Lemma~\ref{LPS}, we first notice that
  \begin{multline*}
    H\biggl(\frac{t-(M-A/2)}{A}\biggr)
    +H\biggl(\frac{t-(M+A/2)}{A}\biggr)
    +\ldots+
    H\biggl(\frac{t-(M+(K-1/2)A)}{A}\biggr)
    \\=
    \begin{cases}
      H\biggl(\frac{t-(M-A/2)}{A}\biggr)
      &\text{when $M-{A}/{2}\le t\le M+A/2$},\\
      1&\text{when $M+A/2\le t\le M+(K+1/2)A$},\\
      H\biggl(\frac{t-(M+(K-1/2)A)}{A}\biggr)
      &\text{when $M+(K-1/2)A\le t\le M+(K+1/2)A$}.
    \end{cases}
  \end{multline*}
  Therefore
  \begin{equation*}
    \biggl|\sum_{M< n\le M+N}\chi(n)
    -\sum_{1\le k\le K}\sum_{n}\chi(n)H\biggl(\frac{t-(M+(k-1/2)A)}{A}\biggr)
    \biggr|
    \le
    \frac{4}{A}\sum_{1\le a\le A/2}a
  \end{equation*}
  which is readily seen to be of size at most
  $\frac{A}{2}+1$. On using Lemma~\ref{LPS}, we get
  \begin{equation}
    \label{bound}
    \biggl|\sum_{M< n\le M+N}\chi(n)
    \biggr|
    \le
    K\sqrt{q}-\frac{KA}{\sqrt{q}}
    +\frac{A}{2}+1
    \le
    K\sqrt{q}-\frac{N}{\sqrt{q}}
    +\frac{N}{2K}+1.
  \end{equation}
  We let $K=1+[q^{-1/4}\sqrt{N/2}]$ and write
  $K = c + q^{-1/4}\sqrt{N/2}$ with $c\in(0,1]$.
  We find that
  \begin{equation*}
    K\sqrt{q}
    +\frac{N}{2K}
    =
    \sqrt{\frac{N\sqrt{q}}{2}}
    +c\sqrt{q}
    + \frac{N}{2c + q^{-1/4}\sqrt{2N}}.
  \end{equation*}
  By computing the derivative with respect to $c$, we check that this
  quantity is maximised at $c=1$. The lemma follows readily.
\end{proof}
%%%%%%%%%%%%%
%%%%%%%%%%%%%%%%%%%%%%%%
\begin{lem}
  \label{L1chi}
  We have
  $
    L(1,\chi)=F(1)\log H+\Ocal^*(VH^{-1})$,
  where $V$ is defined in Lemma~\ref{PV}.
\end{lem}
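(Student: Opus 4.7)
The plan is to relate $L(1,\chi)$ to the partial sum $\sum_{n\le H}\chi(n)/n$ by Abel summation and then recognise the latter as $F(1)\log H$ plus a cancelling boundary term. Write
\[
L(1,\chi)=\sum_{n\le H}\frac{\chi(n)}{n}+\sum_{n>H}\frac{\chi(n)}{n},
\]
and set $S(t)=\sum_{n\le t}\chi(n)$. Applying Abel summation to the tail, for any $N>H$,
\[
\sum_{H<n\le N}\frac{\chi(n)}{n}=\frac{S(N)}{N}-\frac{S(H)}{H}+\int_H^N\frac{S(t)}{t^2}\,dt.
\]
Because $|S(t)|\le V$ for all $t\ge0$ by Lemma~\ref{PV} (applied with $A=0$), the term $S(N)/N$ vanishes as $N\to\infty$, giving
\[
\sum_{n>H}\frac{\chi(n)}{n}=-\frac{S(H)}{H}+\int_H^{\infty}\frac{S(t)}{t^2}\,dt.
\]

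Next, unpacking the definition~\eqref{deff}, one has $F(1)\log H=\sum_{n\le H}\chi(n)/n-f(1)$ and $f(1)=H^{-1}\sum_{n\le H}\chi(n)=S(H)/H$. Substituting these identities into the decomposition of $L(1,\chi)$, the terms $f(1)$ and $S(H)/H$ cancel and we obtain
\[
L(1,\chi)=F(1)\log H+\int_H^{\infty}\frac{S(t)}{t^2}\,dt.
\]

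Finally, the remaining integral is estimated trivially using $|S(t)|\le V$:
\[
\left|\int_H^{\infty}\frac{S(t)}{t^2}\,dt\right|\le V\int_H^{\infty}\frac{dt}{t^2}=\frac{V}{H},
\]
which yields the stated $\mathcal{O}^*(VH^{-1})$ remainder. There is no substantive obstacle in this proof; the only points requiring care are the convergence of the Abel-summation boundary term at infinity (which follows from the uniform P\'olya--Vinogradov bound), and the bookkeeping identifying $f(1)$ with $S(H)/H$ so that the cancellation leaves exactly $F(1)\log H$.
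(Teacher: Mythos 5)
Your proof is correct and follows essentially the same route as the paper: split $L(1,\chi)$ at $H$, apply partial summation to the tail so that the boundary term cancels against $f(1)$, and bound the resulting integral $\int_H^\infty S(t)t^{-2}\,dt$ by $V/H$ using Lemma~\ref{PV}. The only cosmetic difference is that you carry the full sum $S(t)=\sum_{n\le t}\chi(n)$ with explicit boundary terms, whereas the paper writes $1/n=\int_n^\infty dt/t^2$ and recombines the two ranges of $n$ inside the integral; the content is identical.
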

%%%%%%%%%%%%%%%%%%%%%%%%

%%%%%%%%%%%%%%%%%%%%%%%%
\begin{proof}
  By summation by parts, we find that
  \begin{align*}
    \sum_{n>H}\frac{\chi(n)}{n}
    &=
      \int_H^\infty \sum_{H\le n\le t}\chi(n)\frac{dt}{t^2},
  \end{align*}
  hence
  \begin{align*}
    L(1,\chi)
    &=
      F(1)\log H+f(1)+\int_H^\infty \sum_{H\le n\le
      t}\chi(n)\frac{dt}{t^2}
    \\&=
        F(1)\log H+\sum_{n\le H}\int_H^\infty \frac{dt}{t^2}+\int_H^\infty \sum_{H\le n\le
      t}\chi(n)\frac{dt}{t^2}
    \\&=
    F(1)\log H+\int_H^\infty \sum_{ n\le
      t}\chi(n)\frac{dt}{t^2}=F(1)\log H+\Ocal^*(V/H).\qedhere
  \end{align*}
\end{proof}
%%%%%%%%%%%%%%%%%%%%%%%%

%%%%%%%%%%%%%%%%%%%%%%%%%%%%%%%%%%%%%%%%%%%%%%%%%%% 
\section{Preliminaries to Stephens' approach}\label{captain3}
%%%%%%%%%%%%%%%%%%%%%%%%%%%%%%%%%%%%%%%%%%%%%%%%%%%

%%%%%%%%%%%%%%%%%%%%%%%
From \eqref{deff} and \eqref{defell} in \S \ref{cello} it follows that

  \begin{equation}\label{viola}
    \ell(x)=\sum_{m\le H^x}\frac{\chi(m)\Lambda(m)}{m}f\biggl(x-\frac{\log
      m}{\log H}\biggr).
  \end{equation}

We now recast this for greater ease of use in what follows.

%%%%%%%%%%%%%%%%%%%%%%%
\begin{lem}
  \label{L2}
  We have
  \begin{equation*}
    \frac{\ell(x)}{\log H}=xf(x)-\int_0^x f(u)H^udu/H^x.
  \end{equation*}
  If $H\geq V\geq 1$ we also have $\int_0^1 f(u)H^udu/H=\Ocal^*(1/\log H)$ and
  \begin{equation}\label{pencil}
    \sum_{m\le H}\frac{\chi(m)\Lambda(m)}{m\log H}f(x_m)
    %= f(1)+\Ocal^*\biggl(\frac{(1+\log (H/V))VH^{-1}}{\log H}\biggr).
    = f(1)+\Ocal^*\biggl(\frac{R_\chi(H,V,q)}{\log H}\biggr).
  \end{equation}
  where
  \begin{equation}
    \label{defRHVq}
    R_\chi(H,V,q)=
    \begin{cases}
      (3.66+\log(V^2/q))\frac{\sqrt{q}}{H}
      +\log (4e^2\sqrt{q}H/V^2)\frac{V}{2H}
      &\text{when $\chi$ is even},\\[1em]
      (7.2+\log(V^2/q))\frac{\sqrt{q}}{H}
      +\log (2e^2\sqrt{q}H/V^2)\frac{V}{H}
      &\text{when $\chi$ is odd}.
    \end{cases}
  \end{equation}
\end{lem}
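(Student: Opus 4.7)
The plan is to prove the three assertions of Lemma~\ref{L2} in sequence; all three flow from Abel summation together with the already-noted identity~\eqref{viola}.

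For the integral identity $\ell(x)/\log H = xf(x) - H^{-x}\int_0^x f(u)H^u\,du$, I would apply Abel summation to $\sum_{n \le T}\chi(n)\log n$ at $T = H^x$, which gives $S(H^x)\cdot x\log H - \int_1^{H^x} S(t)\,dt/t$ with $S(t) = \sum_{n \le t}\chi(n)$. Since $S(H^u) = H^u f(u)$, and the substitution $t = H^u$ in the integral produces a factor $\log H$, after dividing by $H^x$ and then by $\log H$ one obtains exactly the stated formula. For the auxiliary bound $\int_0^1 f(u)H^u\,du/H = \Ocal^*(1/\log H)$, one simply notes that $f(u) H^u = S(H^u)$ is bounded in absolute value by $V$ (Lemma~\ref{PV}), so the whole integral is $\Ocal^*(V/H)$, which lies in the claimed range given the hypothesis $H \ge V$.

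For~\eqref{pencil}, I would specialize~\eqref{viola} to $x = 1$, identifying the left-hand side with $\ell(1)/\log H$. Applying the first identity at $x = 1$ then turns this into $f(1) - \int_0^1 f(u)H^u\,du/H$, so the task reduces to controlling $\log H \cdot \int_0^1 f(u)H^u\,du/H$ by $R_\chi(H,V,q)$. Substituting $t = H^u$ (or swapping sum and integral) transforms this quantity into $\frac{1}{H}\int_1^H S(t)\,dt/t = \frac{1}{H}\sum_{n \le H}\chi(n)\log(H/n)$, a smoothly-weighted character sum.

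The principal work is the bound on this smooth sum. I would split the integral $\int_1^H S(t)\,dt/t$ at a threshold $t_0$ of order $V^2/\sqrt{q}$, applying Lemma~\ref{Yep} on $[1, t_0]$ (with $\sqrt{2t}\,q^{1/4} + \sqrt{q}$ in the odd case and $\sqrt{t}\,q^{1/4} + \tfrac12\sqrt{q}$ in the even case) and Lemma~\ref{PV} or~\ref{PVbis} on $[t_0, H]$ (halving $V$ when $\chi$ is even, using the $A = 0$ improvement). The Yep range contributes roughly $2V + \sqrt{q}\log t_0$ while the PV range contributes $V\log(H/t_0)$ (halved for even); choosing $t_0$ to balance the two bounds at $|S(t_0)|$ folds the additive $2V$ into a logarithm as the factor $e^2$, producing the $V\log(2 e^2 \sqrt{q} H/V^2)$ term of $R_\chi$ together with the $\sqrt{q}\log(V^2/q)$ term. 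The main obstacle is the careful numerical bookkeeping needed to absorb the leftover $\sqrt{q}$ pieces into the constants $3.66$ and $7.2$, and to carry through the even/odd distinction consistently with the $A = 0$ halving in Lemma~\ref{PVbis}.
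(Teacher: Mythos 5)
Your derivation of the identity $\ell(x)/\log H=xf(x)-\int_0^x f(u)H^u\,du/H^x$ and your reduction of \eqref{pencil} via \eqref{viola} at $x=1$ to bounding $\int_1^H\bigl|\sum_{n\le t}\chi(n)\bigr|\,dt/t$ follow the paper's route exactly. The gap is in the bound itself. With only a two-range split (Lemma~\ref{Yep} on $[1,t_0]$, P\'olya--Vinogradov on $[t_0,H]$), the additive term $\sqrt{q}$ (resp.\ $\tfrac12\sqrt{q}$) from Lemma~\ref{Yep} gets integrated against $dt/t$ all the way down to $t=1$; with $t_0\asymp V^2/\sqrt{q}$ this contributes $\sqrt{q}\log t_0=\sqrt{q}\bigl(\log(V^2/q)+\tfrac12\log q+O(1)\bigr)$, whereas the stated $R_\chi$ only allows $\bigl(7.2+\log(V^2/q)\bigr)\sqrt{q}$ in the odd case. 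Since $V\asymp\sqrt{q}\log q$, the term $\log(V^2/q)$ is only of order $\log\log q$, so your surplus $\tfrac12\sqrt{q}\log q$ (about $27\sqrt{q}$ already at $q=2\cdot10^{23}$, and growing) cannot be ``absorbed into the constants $3.66$ and $7.2$''. The fix --- and what the paper does --- is a three-range split: on $[1,(2+\sqrt{3})\sqrt{q}]$ (odd; $(1+\sqrt{3})\sqrt{q}$ when even) use the trivial bound $\bigl|\sum_{n\le t}\chi(n)\bigr|\le t$, which beats Lemma~\ref{Yep} there and contributes only a bounded multiple of $\sqrt{q}$; only then apply Lemma~\ref{Yep} up to $t_0=V^2/(2\sqrt{q})$ (odd; $V^2/(4\sqrt{q})$ even), so its constant term is integrated over $[\asymp\sqrt{q},\,t_0]$ and yields $\sqrt{q}\log\bigl(V^2/(2(2+\sqrt{3})q)\bigr)$; the leftover multiples of $\sqrt{q}$ are precisely what produce $7.2$ and $3.66$. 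Your handling of the $V$-part (balancing at $t_0$, folding $2V$ into the factor $e^2$, the halving for even characters via $A=0$) is correct and matches the paper.

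A smaller but genuine problem is your justification of $\int_0^1 f(u)H^u\,du/H=\Ocal^*(1/\log H)$: bounding $|f(u)H^u|$ by $V$ only gives $\Ocal^*(V/H)$, and the hypothesis $H\ge V$ yields $V/H\le1$, not $V/H\le1/\log H$ (take $H=V$). Use instead $|f(u)H^u|\le\min(H^u,V)$ and split at $H^u=V$: this gives $\int_0^1|f(u)|H^u\,du\le V/\log H+V\log(H/V)/\log H$, and $V\bigl(1+\log(H/V)\bigr)\le H$ because $1+\log\lambda\le\lambda$ for $\lambda=H/V\ge1$, which delivers the claimed $\Ocal^*(1/\log H)$.
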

%%%%%%%%%%%%%%%%%%%%%%%
The final proof uses only the upper bound part
of~\eqref{pencil}, see~\eqref{defS}. 
%%%%%%%%%%%%%%%%%%%%%%%
\begin{proof}
  We find that
  \begin{align*}
    \sum_{k\le H^x}\chi(k)\log k
    &=
     \sum_{k\le H^x}\chi(k)\log (H^x)
      -\sum_{k\le H^x}\chi(k)\int_k^{H^x}\frac{dt}{t}
    \\&=
        H^xxf(x)\log H
        -\int_1^{H^x}f\Bigl(\frac{\log t}{\log H}\Bigr)t\frac{dt}{t}
    \\&=
        H^xxf(x)\log H
        -\int_0^{x}f(u)(\log H)H^udu
  \end{align*}
  and the first part of the lemma follows readily.
  Concerning  the upper bound for $|\int_0^1 f(u)H^udu|/H$, we proceed
  as follows.
  %%%%%%%%%%%%%%%%
  \subsubsection*{Case of even characters}
  %%%%%%%%%%%%%%%%
By Lemma~\ref{PVbis} and \ref{Yep}, we have three upper bounds for $|f(u)|$: either
$1$, $q^{1/4}H^{-u/2}+\frac12q^{1/2}H^{-u}$ or $V/(2H^u)$.
We have $q^{1/4}H^{-u/2}+\frac12q^{1/2}H^{-u}\le 1$ when
$H^u/\sqrt{q}\ge 1+\sqrt{3}$. We momentarily set $V^*=V/2$. We define
\begin{equation}
  \label{eq:7}
  u_0= \frac{\log(1+\sqrt{3})+\frac12\log q}{\log H}.
\end{equation}
Define the real parameter $a$ by  $\frac12(1-a)\log H=\log (\sqrt{\sqrt{q}H}/V^*)$. We get
\begin{align*}
 \int_0^1 |f(u)|H^udu
  &\le
    \int_0^{u_0} H^udu+\int_{u_0}^a (q^{1/4}H^{u/2}+\tfrac12q^{1/2})du
    +\int_a^1V^*du
  \\&\le
  \frac{H^{u_0}-1}{\log
  H}+\frac{2q^{1/4}(H^{a/2}-H^{u_0/2})}{\log H}
  +\frac{a-u_0}{2}q^{1/2}+(1-a)V^*
  \\&\le
  \frac{\sqrt{q}}{\log H}
  \bigl(1+\sqrt{3}+2\sqrt{1+\sqrt{3}}\bigr)
  +\frac{2q^{1/4}\sqrt{H}}{\log H}\frac{V^*}{\sqrt{\sqrt{q}H}}
  %%%%%%%%%%%%%%%%%%%
  \\&\qquad+\frac{\log \frac{V^{*2}}{q(1+\sqrt{3})}}{\log H}
  \sqrt{q}
  %%%%%%%%%%%%%%%%%%%
  +\frac{\log(\sqrt{q}H/V^{*2})}{\log H}V^*%-\frac{1}{\log H}
  \\&\le
  \frac{\sqrt{q}}{\log H}
  \left(1+\sqrt{3}+2\sqrt{1+\sqrt{3}}
  +\log \frac{V^{*2}}{q(1+\sqrt{3})}\right)\\
  &\qquad
  +\frac{2V^*}{\log H}+\frac{\log(\sqrt{q}H/V^{*2})}{\log H}V^*.
\end{align*} 
  %%%%%%%%%%%%%%%% 
  \subsubsection*{Case of odd characters}
  %%%%%%%%%%%%%%%%
Again by Lemma~\ref{PVbis} and \ref{Yep}, we have three upper bounds for $|f(u)|$: either
$1$, $q^{1/4}\sqrt{2}H^{-u/2}+q^{1/2}H^{-u}$ or $V/H^u$.
We have $q^{1/4}\sqrt{2}H^{-u/2}+q^{1/2}H^{-u}\le 1$ when
$H^u/\sqrt{q}\ge 2+\sqrt{3}$. We define
\begin{equation}
  \label{eq:7}
  u_0= \frac{\log(2+\sqrt{3})+\frac12\log q}{\log H}.
\end{equation}
Define the real parameter $a$ by  $\frac12(1-a)\log H=\log (\sqrt{2\sqrt{q}H}/V)$. We get
\begin{align*}
 \int_0^1 |f(u)|H^udu
  &\le
    \int_0^{u_0} H^udu+\int_{u_0}^a (q^{1/4}\sqrt{2}H^{u/2}+q^{1/2})du
    +\int_a^1Vdu
  \\&\le
  \frac{H^{u_0}-1}{\log
  H}+\frac{2\sqrt{2}q^{1/4}(H^{a/2}-H^{u_0/2})}{\log H}
  +(a-u_0)q^{1/2}+(1-a)V
  \\&\le
  \frac{\sqrt{q}}{\log H}
  \bigl(2+\sqrt{3}+2\sqrt{2}\sqrt{2+\sqrt{3}}\bigr)
  +\frac{2\sqrt{2}q^{1/4}\sqrt{H}}{\log H}\frac{V}{\sqrt{2\sqrt{q}H}}
  %%%%%%%%%%%%%%%%%%%
  \\&\qquad+\frac{\log \frac{V^2}{2q(2+\sqrt{3})}}{\log H}
  \sqrt{q}
  %%%%%%%%%%%%%%%%%%%
  +\frac{\log(2\sqrt{q}H/V^2)}{\log H}V
  \\&\le
  \frac{\sqrt{q}}{\log H}
  \bigl(2+\sqrt{3}+2\sqrt{2}\sqrt{2+\sqrt{3}}
  +\log \frac{V^2}{2q(2+\sqrt{3})}\bigr)\\
  &\qquad
  +\frac{2V}{\log H}+\frac{\log(2\sqrt{q}H/V^2)}{\log H}V.
\end{align*}
  %%%%%%%%%%%%%%%% 
  \subsubsection*{Resuming the proof}
  %%%%%%%%%%%%%%%%
Inequality \eqref{pencil} follows: indeed, by \eqref{viola}, the left-hand
side is $\ell(1)/\log H$ which we compute with the first formula of the
present lemma. We complete the proof by  using the bound above for
$\int_0^1 |f(u)|H^udu$. 
\end{proof}
%%%%%%%%%%%%%%%%%%%%%%%

%%%%%%%%%%%%%%%%%%%%%%%
\begin{lem}
  \label{L3}
  We have
  \begin{multline*}
    \biggl(x-\frac{1}{\log H}\biggr)F(x)
    =\int_0^x F(x-y)dy
    \\+\sum_{m\le H^x}\frac{\chi(m)\Lambda(m)}{m\log H}
      F\biggl(x-\frac{\log
      m}{\log H}\biggr)+\Ocal^*(1/\log^2 H).
  \end{multline*}
 \end{lem}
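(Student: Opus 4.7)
The plan is to integrate the first identity of Lemma~\ref{L2}, namely $\ell(t)/\log H = tf(t) - H^{-t}\int_0^t f(u) H^u\,du$, over $t\in[0,x]$. On the left, we insert the representation \eqref{viola} of $\ell(t)$, exchange sum and integration (the inner sum is finite for each $t$, and $\Lambda(1)=0$ so only $m\ge2$ contribute), and use the substitution $s=t-\log m/\log H$ to evaluate $\int_{\log m/\log H}^x f(s)\,ds = F(x-\log m/\log H)$. This produces
$$\int_0^x \frac{\ell(t)}{\log H}\,dt = \sum_{m\le H^x}\frac{\chi(m)\Lambda(m)}{m\log H}\,F\biggl(x-\frac{\log m}{\log H}\biggr).$$

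On the right-hand side, integration by parts gives $\int_0^x tf(t)\,dt = xF(x) - \int_0^x F(t)\,dt$, and a Fubini exchange yields
$$\int_0^x \frac{1}{H^t}\int_0^t f(u)H^u\,du\,dt = \int_0^x f(u)\,\frac{1-H^{u-x}}{\log H}\,du = \frac{F(x)}{\log H} - \frac{1}{H^x\log H}\int_0^x f(u)H^u\,du.$$
Equating both sides, rearranging, and noting that $\int_0^x F(t)\,dt = \int_0^x F(x-y)\,dy$ via the substitution $y=x-t$, we arrive at the claimed identity modulo the residual $-\frac{1}{H^x\log H}\int_0^x f(u)H^u\,du$.

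The main obstacle is to show that this residual is $\Ocal^*(1/\log^2 H)$. This is exactly the quantity already controlled at $x=1$ in the proof of Lemma~\ref{L2}, where the bound $H^{-1}\int_0^1 |f(u)|H^u\,du = \Ocal^*(1/\log H)$ was established by splitting $[0,1]$ into three sub-intervals and invoking in turn the trivial bound $|f(u)|\le 1$ for small $u$, the Burgess-type bound of Lemma~\ref{Yep} for intermediate $u$, and the P\'olya--Vinogradov bound of Lemma~\ref{PVbis} for large $u$. The same three-piece argument, applied with break-points adjusted to $\min(u_0, x)$ and $\min(a, x)$ within $[0,x]$, delivers the uniform estimate $H^{-x}\int_0^x |f(u)|H^u\,du = \Ocal^*(1/\log H)$, and one additional division by $\log H$ yields the desired $\Ocal^*(1/\log^2 H)$ error term.
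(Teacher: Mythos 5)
Your derivation of the identity is exactly the paper's: you integrate $tf(t)-H^{-t}\int_0^t f(u)H^u\,du=\ell(t)/\log H$ over $[0,x]$, use \eqref{viola} and the substitution $s=t-\frac{\log m}{\log H}$ to produce the sum of $F\bigl(x-\frac{\log m}{\log H}\bigr)$, and handle the left side via $\int_0^x tf(t)\,dt=xF(x)-\int_0^x F(x-y)\,dy$ and a Fubini exchange. Up to that point the proposal matches the paper's proof of Lemma~\ref{L3}.

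Where you diverge is in bounding the residual $\frac{1}{H^x\log H}\int_0^x f(u)H^u\,du$, and there your route has a genuine (though easily repaired) defect. You invoke the three-piece character-sum argument from the proof of Lemma~\ref{L2}, but that estimate is the \emph{conditional} part of Lemma~\ref{L2}: it needs $H\ge V\ge 1$ (and the hypotheses of Lemmas~\ref{PVbis} and~\ref{Yep}), whereas Lemma~\ref{L3} carries no such hypothesis. Moreover, to land on the stated error $\Ocal^*(1/\log^2 H)$ you need the uniform-in-$x$ bound $H^{-x}\int_0^x|f(u)|H^u\,du\le 1/\log H$ with constant exactly $1$, which you assert but do not verify; redoing the three-piece computation with break-points $\min(u_0,x)$, $\min(a,x)$ produces a constant that must be checked, not an automatic $1$. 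All of this is unnecessary: since $|f(u)|\le 1$ trivially, one has
\begin{equation*}
  \biggl|\frac{1}{H^x\log H}\int_0^x f(u)H^u\,du\biggr|
  \le \frac{1}{H^x\log H}\cdot\frac{H^x-1}{\log H}
  < \frac{1}{\log^2 H},
\end{equation*}
which is precisely how the paper concludes, unconditionally and with the exact constant. Replace your last paragraph by this one-line bound and the proof is complete; the refined character-sum estimates are only needed later (for \eqref{pencil} and $R_\chi$), not here.
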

%%%%%%%%%%%%%%%%%%%%%%%

%%%%%%%%%%%%%%%%%%%%%%%
\begin{proof}
  On joining \eqref{viola} and  Lemma~\ref{L2}, we get
\begin{equation}
  \label{opt1}
  xf(x)-\int_0^x f(u)H^udu/H^x
  =
  \sum_{m\le H^x}\frac{\chi(m)\Lambda(m)}{m\log H}f\biggl(x-\frac{\log
      m}{\log H}\biggr).
\end{equation}
This is the equivalent of \cite[(55)]{Stephens} by Stephens.
% Specializing
% at $x=1$, and recalling the last estimate of Lemma~\ref{L2}, we get
% the last estimate of our lemma.
The next step is to integrate the above relation:
\begin{align*}
  \int_0^x yf(y)dy-\int_0^x\int_0^y f(u)H^{u-y}dudy
  &=\int_0^x\sum_{m\le H^y}\frac{\chi(m)\Lambda(m)}{m\log H}f\biggl(y-\frac{\log
      m}{\log H}\biggr)dy
  \\&=\sum_{m\le H^x}\frac{\chi(m)\Lambda(m)}{m\log H}
      \int_{\frac{\log m}{\log H}}^x f\biggl(y-\frac{\log
      m}{\log H}\biggr)dy
  \\&=\sum_{m\le H^x}\frac{\chi(m)\Lambda(m)}{m\log H}
      F\biggl(x-\frac{\log
      m}{\log H}\biggr).
\end{align*}
As for the left-hand side, we 
first check that
\begin{equation}
  \label{eq:1}
  \int_0^x yf(y)dy = xF(x)-\int_0^x F(x-y)dy.
\end{equation}
And finally
\begin{align*}
  \int_0^x\int_0^y f(u)H^{u-y}dudy
  &=
    \int_0^xf(u)H^{u}\int_u^x H^{-y}dydu
  \\&=
    \int_0^xf(u)H^{u}\frac{H^{-u}-H^{-x}}{\log H}du
      =\frac{F(x)}{\log H}+\Ocal^*(1/\log^2H)
\end{align*}
by bounding $|f(u)|$ by~1.
\end{proof}
%%%%%%%%%%%%%%%%%%%%%%%

%%%%%%%%%%%%%%%%%%%%%%%
\begin{lem}
  \label{L6}
  We have, when $x\ge0$
  \begin{equation*}
    \int_0^x F(x-y)dy=
    \sum_{m\le H^x}\frac{\Lambda(m)}{m\log H}
      F\biggl(x-\frac{\log
      m}{\log H}\biggr)-F(x)\frac{\gamma}{\log
      H}+\Ocal^*\biggl(\frac{0.411}{\log^2 H}\biggr).
  \end{equation*}
\end{lem}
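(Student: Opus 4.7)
The strategy is to represent both sides as integrals on $[1,H^x]$---the left-hand side against $du/u$ and the right-hand sum against $d\tilde\psi(u)$, where $\tilde\psi(u)=\sum_{n\le u}\Lambda(n)/n$---and then to control their difference via the mean bound
\begin{equation*}
\int_1^\infty |\tilde\psi(u) - \log u + \gamma|\,\frac{du}{u} \le 0.411
\end{equation*}
from Lemma~\ref{appLambdatilde1}.

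The change of variables $y=\log u/\log H$ (so $du/u=(\log H)\,dy$) converts the left-hand side into
\begin{equation*}
\int_0^x F(x-y)\,dy = \frac{1}{\log H}\int_1^{H^x} G(u)\,\frac{du}{u},\qquad G(u):=F\!\left(x - \tfrac{\log u}{\log H}\right),
\end{equation*}
while the right-hand sum is $(\log H)^{-1}\int_{1^-}^{H^x} G(u)\,d\tilde\psi(u)$ in Stieltjes form. Setting $R(u):=\tilde\psi(u)-\log u+\gamma$ so that $d\tilde\psi=du/u+dR$, the discrepancy between the two sides of the claim reduces to evaluating $(\log H)^{-1}\int_{1^-}^{H^x}G\,dR$.

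The last step is integration by parts. Because $G\in C^1$ while $R$ jumps only at prime powers, the identity
\begin{equation*}
\int_{1^-}^{H^x}G\,dR = \bigl[GR\bigr]_{1^-}^{H^x} - \int_1^{H^x}R(u)G'(u)\,du
\end{equation*}
holds. The upper boundary term vanishes since $G(H^x)=F(0)=0$; at the lower boundary, $R(1^-)=\gamma$ (because $\tilde\psi(1)=0$, as $\Lambda(1)=0$), contributing $-\gamma F(x)$. The interior integral involves $G'(u)=-f(x-\log u/\log H)/(u\log H)$, so using the trivial bound $|f|\le 1$ from~\eqref{deff} and Lemma~\ref{appLambdatilde1},
\begin{equation*}
\left|\int_1^{H^x}R(u)G'(u)\,du\right| \le \frac{1}{\log H}\int_1^\infty|R(u)|\,\frac{du}{u} \le \frac{0.411}{\log H}.
\end{equation*}
Collecting these pieces and dividing through by a further $\log H$ yields the claimed identity.

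The two points that demand the most care are the justification of Riemann--Stieltjes integration by parts in the presence of jumps of~$R$, which is routine because $G$ is continuously differentiable and the jump points of $R$ and $G$ are disjoint, and the evaluation of the boundary term with the correct sign at $u=1^-$. I would sanity-check the latter against the degenerate case $F\equiv 1$ (where the left-hand side equals $x$ and the right-hand sum equals $\tilde\psi(H^x)/\log H = x-\gamma/\log H + o(1/\log H)$) to pin down the sign conventions before committing to the final display.
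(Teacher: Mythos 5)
Your reduction is, in substance, the paper's own argument in a different notation: the paper interchanges the sum over $m$ with the integral defining $F$, recognises the inner sum as $\tilde\psi(H^{x-t})$, replaces it by $(x-t)\log H-\gamma$, and bounds the remainder via Lemma~\ref{appLambdatilde1} together with $|f|\le 1$; your Stieltjes integration by parts against $d\tilde\psi=du/u+dR$ is the partial-summation form of the same computation and produces the same main term and the same $0.411/\log^2H$ error. Two small technical remarks: $G$ is Lipschitz but not $C^1$, since $f$ jumps at the points $\log n/\log H$, so it is absolute continuity of $G$ (not smoothness) that justifies the integration by parts against the BV function $R$; and your proposed sanity check with $F\equiv1$ is not admissible for your own argument, which uses $G(H^x)=F(0)=0$ --- test with $f\equiv1$, $F(t)=t$ instead.

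The genuine problem is your last step. Your computation gives
\begin{equation*}
  \sum_{m\le H^x}\frac{\Lambda(m)}{m\log H}F\Bigl(x-\frac{\log m}{\log H}\Bigr)
  =\int_0^x F(x-y)\,dy-\frac{\gamma F(x)}{\log H}+\Ocal^*\Bigl(\frac{0.411}{\log^2H}\Bigr),
\end{equation*}
that is, $\int_0^x F(x-y)\,dy$ equals the sum \emph{plus} $\gamma F(x)/\log H$, whereas the statement you set out to prove carries a \emph{minus} sign; the two differ by $2\gamma F(x)/\log H$, which cannot be absorbed into an $\Ocal^*(0.411/\log^2H)$ term. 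So ``collecting these pieces'' does not yield the claimed identity, and the sign check you postponed is precisely the step that decides the matter: with $f\equiv1$, $F(t)=t$, the left-hand side is $x^2/2$ while the sum is $x^2/2-\gamma x/\log H+\Ocal(1/\log^2H)$, confirming the plus sign. In fact the paper's own proof derives exactly this plus-sign version (its stated main term for the sum is $\int_0^xF(t)\,dt-F(x)\gamma/\log H$), so the printed statement of Lemma~\ref{L6} appears to contain a sign slip, which propagates to the constant $1-\gamma$ in Lemma~\ref{L7} (it should be $1+\gamma$) and to the choice of $a$ in \S\ref{captain6}. As written, your proposal claims to establish the stated sign while actually establishing the opposite one; you should either carry out the check and record the corrected sign explicitly, or explain why the statement as printed cannot hold.
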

%%%%%%%%%%%%%%%%%%%%%%%

%%%%%%%%%%%%%%%%%%%%%%%
\begin{proof}
  We start from the right-hand side:
  \begin{align*}
    \sum_{m\le H^x}\frac{\Lambda(m)}{m\log H}
      F\biggl(x-\frac{\log
      m}{\log H}\biggr)
    &=
      \sum_{m\le H^x}\frac{\Lambda(m)}{m\log H}
      \int_0^{x-\frac{\log m}{\log H}}f(t)dt
    \\&=
        \int_0^{x}f(t)
        \sum_{m\le H^{x-t}}\frac{\Lambda(m)}{m\log H}
        dt.
  \end{align*}
  We approximate $\tilde\psi(H^{x-t})$ by
  $(x-t)\log H-\gamma$, getting the main term
  and this is $\int_0^xF(t)dt-F(x)\frac{\gamma}{\log H}$ and treat the
  error term by bounding $|f(t)|$ by 1:
  \begin{align*}
    \int_0^{x}\biggl|f(t)
        \biggl(\frac{\tilde\psi(H^{x-t})}{\log
    H}-x-t-\frac{\gamma}{\log H}\biggr)\biggr|dt
    &\le
      \int_0^{x}\biggl|
      \frac{\tilde\psi(H^{x-t})}{\log
      H}-x-t-\frac{\gamma}{\log H}\biggr|dt
    \\&
         \le
      \int_0^{x}\biggl|
      \frac{\tilde\psi(H^{t})}{\log
      H}-t-\frac{\gamma}{\log H}\biggr|dt 
    \\&
         \le
      \int_1^{H^x}|
      \tilde\psi(u)-\log u-\gamma|\frac{du}{u\log^2H}.
  \end{align*}
  We then majorize this last term by Lemma~\ref{appLambdatilde1}: it
    is not more than $0.411/\log^2H$.
\end{proof}
%%%%%%%%%%%%%%%%%%%%%%%

%%%%%%%%%%%%%%%%%%%%%%%
\begin{lem}
  \label{L7}
  We have, when $x\ge0$,
  \begin{equation*}
    xF(x)=
    \sum_{m\le H^x}\frac{\Lambda(m)(1+\chi(m))}{m\log H}
      F\biggl(x-\frac{\log
      m}{\log H}\biggr)
   %%%%%%%%%%%%%%%%%%%%%%%%%%%%%%%
    +F(x)\frac{1-\gamma}{\log
      H}+\Ocal^*\biggl(\frac{1.411}{\log^2 H}\biggr).
  \end{equation*}
\end{lem}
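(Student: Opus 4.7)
The plan is simply to combine Lemma~\ref{L3} and Lemma~\ref{L6}, since between them they already encode both the character-weighted and the unweighted von~Mangoldt sums against $F$ that appear on the right-hand side of the target identity.

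More precisely, I would start from Lemma~\ref{L3}, which rewrites $(x-1/\log H)F(x)$ as the sum of $\int_0^x F(x-y)\,dy$, the $\chi$-weighted von~Mangoldt sum against $F$, and an error of size at most $1/\log^2 H$. The only obstacle to obtaining the desired shape is the presence of the integral $\int_0^x F(x-y)\,dy$, and this is exactly the quantity that Lemma~\ref{L6} evaluates: it replaces the integral by the unweighted von~Mangoldt sum $\sum_{m\le H^x}\frac{\Lambda(m)}{m\log H}F(x-\tfrac{\log m}{\log H})$, minus $F(x)\gamma/\log H$, up to an error of $0.411/\log^2 H$.

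Substituting the expression of Lemma~\ref{L6} into the right-hand side of Lemma~\ref{L3} and grouping the two sums into a single sum weighted by $\Lambda(m)(1+\chi(m))$ gives
\begin{equation*}
  \Bigl(x-\tfrac{1}{\log H}\Bigr)F(x)
  =\sum_{m\le H^x}\frac{\Lambda(m)(1+\chi(m))}{m\log H}
   F\Bigl(x-\tfrac{\log m}{\log H}\Bigr)
   -\frac{\gamma\,F(x)}{\log H}
   +\Ocal^*\Bigl(\tfrac{1.411}{\log^2 H}\Bigr),
\end{equation*}
since $1+0.411=1.411$. Transposing the term $-F(x)/\log H$ from the left-hand side to the right-hand side then produces the factor $(1-\gamma)/\log H$ in front of $F(x)$, and the identity stated in the lemma follows.

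I do not foresee any real obstacle: there is nothing to estimate beyond what has already been done in Lemmas~\ref{L3} and~\ref{L6}, and the only bookkeeping is checking that the two error terms add up to at most $1.411/\log^2 H$ and that the coefficient of $F(x)/\log H$ becomes $1-\gamma$ after moving the $-F(x)/\log H$ term across.
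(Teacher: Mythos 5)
Your proposal is correct and is exactly the paper's argument: the authors likewise prove Lemma~\ref{L7} by combining the identity of Lemma~\ref{L3} with the evaluation of $\int_0^x F(x-y)\,dy$ in Lemma~\ref{L6}, adding the error bounds $1+0.411=1.411$ and moving the $-F(x)/\log H$ term across to produce the $(1-\gamma)/\log H$ coefficient.
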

%%%%%%%%%%%%%%%%%%%%%%%

%%%%%%%%%%%%%%%%%%%
\begin{proof}
  Join the first equality of Lemma~\ref{L3} together with Lemma~\ref{L6}.
\end{proof}
%%%%%%%%%%%%%%%%%%%
%This lemma is the equivalent of \autocite[Lemma 1]{Stephens}.
%%%%%%%%%%%%%%%%%%%%%%%%%%%%%%%%%%%%%%%
\section{A comparison and the main inequality}\label{captain4}
%%%%%%%%%%%%%%%%%%%%%%%%%%%%%%%%%%%%%%%

This section is devoted to the comparison between
\begin{equation*}
  \sum_{m\le H}\frac{\Lambda(m)}{m\log H}
  f(x_m)
\end{equation*}
and $F(1)$. The important observation, essentially due to Stephens, is
that since $f$ has tame variations, both should be about
equal.
One look at the final proof discloses that it is enough to bound the
initial sum \emph{from below} by $F(1)$ plus some error term.

%%%%%%%%%%%%%%%%%%
%%%%%%%%%%%%%%%%%%
We first connect $F(1)$ with the bounds on character sums, that is, with the $V$ from Lemmas \ref{PV} and \ref{PVbis}.
%%%%%%%%%%%%%%%%%%%%%%%%%

%%%%%%%%%%%%%%%%%%%%%%%%%%%%%%%%
\begin{lem}
  \label{F1}
  We have, for any $D\ge1$,
  \begin{equation*}
    F(1)=\sum_{n\le H/D}\frac{\chi(n)}{n\log H}
    -\frac{f(1)}{\log H}+\Ocal^*\biggl(\frac{(D-1)V/H}{\log H}\biggr).
  \end{equation*}
\end{lem}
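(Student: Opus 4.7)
The plan is to read off the claimed identity directly from the definition of $F$ in \eqref{deff}. Evaluating at $x=1$ gives
$$F(1) = \sum_{n \le H}\frac{\chi(n)}{n \log H} - \frac{f(1)}{\log H},$$
so the lemma is equivalent to the character-sum estimate
$$\biggl|\sum_{H/D < n \le H}\frac{\chi(n)}{n}\biggr| \le \frac{(D-1)V}{H}.$$
No other identity is required; everything reduces to this one-line tail bound.

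I would establish the tail bound by Abel summation. Setting $T(t) = \sum_{H/D < n \le t}\chi(n)$, one has $T(H/D) = 0$, and $|T(t)| \le V$ uniformly in $t$ by Lemmas~\ref{PV} and~\ref{PVbis} (applied to the range $(H/D,t]$). Standard partial summation then yields
$$\sum_{H/D < n \le H}\frac{\chi(n)}{n} = \frac{T(H)}{H} + \int_{H/D}^H \frac{T(t)}{t^2}\,dt,$$
and the integral is majorised by $V\int_{H/D}^H dt/t^2 = V(D-1)/H$.

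The only delicate point is the treatment of the boundary term $T(H)/H$: estimated naively by $V/H$, it produces an overall error of $DV/H$, whereas the lemma claims the sharper $(D-1)V/H$. Obtaining the sharper constant requires slightly cleaner bookkeeping---for example, applying Abel summation anchored at the right endpoint via $B(n) = \sum_{n < k \le H}\chi(k)$, for which the outer boundary term vanishes automatically, or alternatively exploiting the monotonicity of $1/n$ to absorb $T(H)/H$ into the integral. Apart from this minor refinement, the argument is entirely routine; the whole proof is essentially one application of summation by parts combined with the P\'olya--Vinogradov input packaged in $V$.
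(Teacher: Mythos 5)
Your main argument is the same as the paper's: Lemma \ref{F1} is read off from the exact identity $F(1)=\sum_{n\le H}\frac{\chi(n)}{n\log H}-\frac{f(1)}{\log H}$ coming from \eqref{deff}, and the tail $\sum_{H/D<n\le H}\chi(n)/n$ is handled by partial summation; the paper writes $\frac1n=\frac1H+\int_n^H\frac{dt}{t^2}$, so that
\begin{equation*}
  \sum_{H/D<n\le H}\frac{\chi(n)}{n}
  =\int_{H/D}^H\Bigl(\sum_{H/D<n\le t}\chi(n)\Bigr)\frac{dt}{t^2}
  +\frac1H\sum_{H/D<n\le H}\chi(n),
\end{equation*}
and bounds each character sum by $V$ via Lemmas \ref{PV} and \ref{PVbis}. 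Up to and including your bound $DV/H$ you have therefore reproduced the paper's proof.

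What does not work is your proposed repair of the boundary term. Anchoring the summation by parts at the right endpoint, i.e.\ using $B(t)=\sum_{t<k\le H}\chi(k)$, does make the term at $t=H$ vanish, but it creates instead a boundary term $B(H/D)/(H/D)$ at the left endpoint, of size up to $DV/H$, so that route gives roughly $(2D-1)V/H$, which is worse. More fundamentally, if the only input is $|\sum_{A\le n\le B}\chi(n)|\le V$ (which is all the quoted lemmas supply), then $DV/H$ is sharp: with $T(t)=\sum_{H/D<n\le t}\chi(n)$ known only to satisfy $|T|\le V$, the quantity $|T(H)/H+\int_{H/D}^H T(t)t^{-2}\,dt|$ can be as large as $V/H+(D-1)V/H$, so no rearrangement of the bookkeeping (monotonicity of $1/n$ included) can produce $(D-1)V/H$. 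The constant $D-1$ in the statement is in fact a harmless slip in the paper --- its own displayed computation also yields $DV/H$ once both terms are bounded by $V$ --- and correcting it merely replaces $(2.04D-1)VH^{-1}$ by $2.04D\,VH^{-1}$ in Lemma \ref{FirstApp} (equivalently deletes the $-VH^{-1}$ term in Lemma \ref{L4}), a change of at most $1/B$ in the numerator, negligible for the final numerics. So: right approach, and you were right to flag the boundary term, but the two suggested fixes should be dropped rather than pursued.
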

%%%%%%%%%%%%%%%%%%%%%%%%%%%%%%%%

%%%%%%%%%%%%%%%%%%%%%%%%
\begin{proof}
    We have
  \begin{align*}
      \sum_{H/D<n\le H}\frac{\chi(n)}{n}
      &=
        \sum_{H/D<n\le H}\chi(n)\biggl(\int_n^H\frac{dt}{t^2}+\frac{1}{H}\biggr)
      \\&=
        \int_{H/D}^H\sum_{H/D<n\le t}\chi(n)\frac{dt}{t^2}+\frac{1}{H}\sum_{H/D<n\le H}\chi(n).
    \end{align*}
    Using that $|\sum_{A\le n \le B} \chi(n)| \le V$ yields the desired result.
\end{proof}
%%%%%%%%%%%%%%%%%%%%%%%%
%%%%%%%%%%%%%%%%%%%%%
\begin{lem}
  \label{complicated}
    Let $D_0\geq 1$, and $A_2$ be such that 
    \begin{equation}\label{a2eq}
        |\psi(x)-x|\leq A_2\sqrt{x},\quad D_0\leq x\leq 10^{19}.
    \end{equation}
    We then have, for any $D\geq D_0$,
  \begin{equation*}
    \sum_{mn\le H}\chi(n)\Lambda(m)
    =H\bigl(F(x_D)\log H+f(x_D)\bigr)
    +\Ocal^*\bigl(1.6\cdot 10^{-5} H+2A_2HD^{-1/2}+1.04DV\bigr).
  \end{equation*}
\end{lem}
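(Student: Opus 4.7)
The plan is to expand
\[
\sum_{mn\le H}\chi(n)\Lambda(m)=\sum_{n\le H}\chi(n)\,\psi(H/n)
\]
by swapping the order of summation, and then to split the outer sum at $n=H/D$ into two pieces $\Sigma_1+\Sigma_2$ with $\Sigma_1$ running over $n\le H/D$ and $\Sigma_2$ over $H/D<n\le H$.

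For $\Sigma_1$ I would substitute $\psi(H/n)=H/n+(\psi(H/n)-H/n)$. The principal part
\[
H\sum_{n\le H/D}\frac{\chi(n)}{n}
\]
can be rewritten, via \eqref{deff} together with the identity $H^{x_D}=H/D$ coming from~\eqref{defxm}, exactly as $H\bigl(F(x_D)\log H+f(x_D)\bigr)$. This produces the stated main term. The remaining
\[
R_1=\sum_{n\le H/D}\chi(n)\bigl(\psi(H/n)-H/n\bigr)
\]
is handled by splitting according to whether $H/n\le 10^{19}$ or $H/n>10^{19}$. On the first range the hypothesis~\eqref{a2eq} gives $|\psi(H/n)-H/n|\le A_2\sqrt{H/n}$, and Lemma~\ref{loc1} bounds the contribution by $A_2\sqrt{H}\cdot 2\sqrt{H/D}=2A_2H/\sqrt{D}$. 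On the second range (which is only nonempty when $H>10^{19}$) we have $H/n>10^{19}>e^{40}$, so Lemma~\ref{appLambda3} applies and Lemma~\ref{loc2} gives a contribution of at most
\[
1.994\cdot 10^{-8}\,H\bigl(1+\log(H/10^{19})\bigr),
\]
which we absorb into $1.6\cdot 10^{-5}H$ for $H$ in the range relevant to Theorems~\ref{goal} and~\ref{goal2}.

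For $\Sigma_2$ I would expand $\psi(H/n)=\sum_{k\le H/n}\Lambda(k)$ and swap summations; the joint constraint $H/D<n\le H/k$ forces $k<D$, yielding
\[
\Sigma_2=\sum_{k\le D}\Lambda(k)\sum_{H/D<n\le H/k}\chi(n).
\]
The inner character sum over the interval $(H/D,H/k]$ is bounded by $V$ directly from Lemma~\ref{PV}, and then Lemma~\ref{appLambda0} gives $|\Sigma_2|\le V\psi(D)\le 1.04\,DV$. Adding up the three error contributions from $R_1$ and $\Sigma_2$ produces the claimed $\Ocal^*$-term.

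The main obstacle is the treatment of $R_1$ on the tail where $H/n>10^{19}$: we must verify that the logarithmic factor coming from Lemma~\ref{loc2} multiplied by the tiny constant from Lemma~\ref{appLambda3} fits comfortably into $1.6\cdot 10^{-5}H$ for the values of $H$ used in the applications (where $H$ is a modest power of $q$). Every other step reduces to a clean application of the character-sum and prime-counting estimates recalled in~\S\ref{captain2}.
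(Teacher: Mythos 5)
Your decomposition and two of the three error terms match the paper's proof exactly: the split of $\sum_{mn\le H}\chi(n)\Lambda(m)$ at $n=H/D$, the identification of the main term $H\bigl(F(x_D)\log H+f(x_D)\bigr)$ via \eqref{deff} and \eqref{defxm}, the bound $2A_2HD^{-1/2}$ on the range $H/10^{19}<n\le H/D$ through \eqref{a2eq} and Lemma~\ref{loc1}, and the bound $1.04DV$ for the bilinear piece via Lemmas~\ref{PV} and~\ref{appLambda0}. The genuine gap is in the far tail $n\le H/10^{19}$. There you apply Lemma~\ref{appLambda3} on the whole range and obtain $1.994\cdot10^{-8}\,H\bigl(1+\log(H/10^{19})\bigr)$, which you then propose to absorb into $1.6\cdot10^{-5}H$ ``for $H$ in the range relevant to the applications''. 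That justification is not available: the lemma carries no upper restriction on $H$, and none exists downstream, since $H=BV\asymp\sqrt q\,\log q$ and Theorems~\ref{goal} and~\ref{goal2} are asserted for \emph{all} $q$ beyond the stated thresholds. Your bound exceeds $1.6\cdot10^{-5}H$ as soon as $1+\log(H/10^{19})>1.6\cdot10^{-5}/(1.994\cdot10^{-8})\approx 802$, i.e.\ once $\log H\gtrsim 845$ --- large, but squarely inside the range the theorems must cover, so the absorption step is a real failure, not a routine numerical check.

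The missing idea is a second cut inside the tail, which is how the paper proceeds: on $10^{19}<H/n\le A$ with $A=\exp(574)$ it uses Lemma~\ref{appLambda3}, so the logarithmic factor is the \emph{fixed} quantity $1+\log(A/10^{19})$ and this range contributes about $1.03\cdot10^{-5}H$; on $H/n>A$ it switches to the estimate $|\psi(x)-x|\ll x/\log^2x$ of Lemma~\ref{appLambda}, whose summed contribution is at most a constant times $H\bigl(\tfrac1{\log^2H}+\tfrac1{\log A}-\tfrac1{\log H}\bigr)$ and in particular no longer grows like $\log H$. It is precisely this change of estimate for very large $H/n$ that your argument lacks. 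Two cautions if you rework it: first, the last displayed expression actually increases with $H$ towards a limit of order $H/\log A$, so keeping the total below $1.6\cdot10^{-5}H$ uniformly in $H$ is delicate and may require splitting further and importing more entries of the tables in \cite{BKLNW} for very large $x$; second, a painless alternative is to prove the lemma with an error of the shape $2\cdot10^{-8}H\log H$ in place of the constant $1.6\cdot10^{-5}H$ --- your computation gives exactly that, and after the division by $\log H$ in Lemma~\ref{FirstApp} it is still negligible for the theorems --- but that is a different statement from the one you were asked to prove.
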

%%%%%%%%%%%%%%%%%%%%%
Please notice that we would need only the lower estimate in the last bound.
%%%%%%%%%%%%%%%%%%%%%
\begin{proof}
    We write
    \begin{align}\label{chair}
        \sum_{mn\leq H}\chi(n)\Lambda(m)&=\sum_{n\leq H/2}\chi(n)\sum_{m\leq H/n}\Lambda(m)\notag\\
        &=\sum_{n\leq H/D}\chi(n)\sum_{m\leq H/n}\chi(m)+\sum_{m\leq D}\Lambda(m)\sum_{H/D<n\leq H/m}\chi(n).
    \end{align}
    By Lemma 3, the last sum over $n$ is bounded in absolute value by
    $V$. It then follows by Lemma \ref{appLambda0} that the second
    summand of \eqref{chair} satisfies
    \begin{equation*}
        \sum_{m\leq D}\Lambda(m)\sum_{H/D<n\leq H/m}\chi(n)\leq 1.04DV.
    \end{equation*}
    Concerning the first summand of \eqref{chair}, we use three steps.
    For the first step, we restrict to the range $H/10^{19}<n\leq H/D$
    and use \eqref{a2eq}. Note that Lemma \ref{appLambda2} tells us
    that we can take $A_2=0.94$ provided $D_0>11$. A quick calculation
    also shows that $A_2=\sqrt{2}$ works for $D_0\geq 1$, or
    $A_2=0.956$ works for $D_0\geq 7$. Now,
    \begin{align*}
        &\sum_{\frac{H}{10^{19}}<n\leq\frac{H}{D}}\chi(n)\sum_{m\leq H/n}\Lambda(m)\\
        &\qquad=\sum_{\frac{H}{10^{19}}<n\leq\frac{H}{D}}\chi(n)\sum_{m\leq H/n}\Lambda(m)\\
        &\qquad=H\sum_{\frac{H}{10^{19}}<n\leq\frac{H}{D}}\frac{\chi(n)}{n}
          +\mathcal{O}^*\left(\sum_{\frac{H}{10^{19}}<n\leq\frac{H}{D}}
          A_2\sqrt{\frac{H}{n}}\right)\\
        &\qquad=H\sum_{\frac{H}{10^{19}}<n\leq\frac{H}{D}}\frac{\chi(n)}{n}
          +\mathcal{O}^*\left(2A_2HD^{-1/2}\right),
    \end{align*}
    where for the second equality we used Lemma \ref{loc1}. 
    
    For the second step, we use Lemma \ref{appLambda3} and consider the range $H/A<n\leq H/10^{19}$, where $A\geq\exp(40)$ is to be chosen later. That is,
    \begin{align}\label{chairs}
        \sum_{\frac{H}{A}<n\leq\frac{H}{10^{19}}}\chi(n)\sum_{m\leq H/n}\Lambda(m)&=H\sum_{\frac{H}{A}<n\leq\frac{H}{10^{19}}}\frac{\chi(n)}{n}+\mathcal{O}^*\left(1.93378\cdot 10^{-8}\sum_{\frac{H}{A}<n\leq\frac{H}{10^{19}}}\frac{H}{n}\right)\notag\\
        &=H\sum_{\frac{H}{A}<n\leq\frac{H}{10^{19}}}\frac{\chi(n)}{n}+\mathcal{O}^*\left(1.93378\cdot 10^{-8}(1+\log(A/10^{19}))H\right),
    \end{align}
    where for the second equality we used Lemma \ref{loc2}.
    
    For the third step we consider the sum over $n\leq H/A$. First, if $H<A$ then there is nothing to add. On the other hand, if $H\geq A$ we use Lemma \ref{appLambda} to get 
    \begin{equation}\label{table}
        \sum_{n\leq\frac{H}{A}}\chi(n)\sum_{m\leq H/n}\Lambda(m)=H\sum_{n\leq\frac{H}{A}}\frac{\chi(n)}{n}+\mathcal{O}^*\left(\sum_{n\leq\frac{H}{A}}\frac{1.83H}{n\log^2(H/n)}\right).
    \end{equation}
    Since $n\log^2 (H/n)$ is increasing when $n\leq H/e^2$, we have that
    \begin{align*}
        \sum_{n\leq H/A}\frac{1.83H}{n\log^2(H/n)}&\leq\frac{1.83H}{\log^2H}+H\int_{1}^{H/A}\frac{1.83dt}{t\log^2(H/t)}\\
        &\leq\frac{1.83H}{\log^2H}+1.83H\left(\frac{1}{\log A}-\frac{1}{\log H}\right)\\
        &=1.83H\left(\frac{1}{\log^2H}+\frac{1}{\log A}-\frac{1}{\log H}\right).
    \end{align*}
    Since the above is decreasing in $H$, and $H\geq A$, we can set $A=\exp(574)$ to bound the $\mathcal{O}^*$ terms in \eqref{chairs} and \eqref{table} by $1.6\cdot 10^{-5}H$.
\end{proof}

%%%%%%%%%%%%%%%%%%%%%

%%%%%%%%%%%%%%%%%%%%%%
\begin{lem}\label{FirstApp}
    For any $D\geq D_{0}\geq 1$
    \begin{equation*}
        \left|\sum_{m\leq H}\frac{\Lambda(m)}{m\log H}f(x_m)-F(1)-\frac{f(1)}{\log H}\right|\leq\frac{1.6\cdot 10^{-5}+2A_2D^{-1/2}+(2.04D-1)VH^{-1}}{\log H},
    \end{equation*}
    where $A_{2}$ is as in Lemma \ref{complicated}.
\end{lem}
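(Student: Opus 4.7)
The plan is to unfold the sum on the left into a double sum, apply Lemma~\ref{complicated} to evaluate it in terms of $F(x_D)$ and $f(x_D)$, and finally use Lemma~\ref{F1} together with the identity relating $F(x)+f(x)/\log H$ to a partial sum of $\chi(n)/n$ in order to swap $x_D$ for $1$. The constant $2.04D-1$ in the error should emerge as $1.04D+(D-1)$, one contribution from each of the two main estimates invoked.

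First, I would observe that by the definition \eqref{deff} of $f$ and \eqref{defxm} of $x_m$, we have $H^{x_m}=H/m$ and hence
\begin{equation*}
  f(x_m)=\frac{m}{H}\sum_{k\le H/m}\chi(k),
\end{equation*}
so that
\begin{equation*}
  \sum_{m\le H}\frac{\Lambda(m)}{m\log H}f(x_m)
  =\frac{1}{H\log H}\sum_{mn\le H}\chi(n)\Lambda(m).
\end{equation*}
Applying Lemma~\ref{complicated} to this double sum gives
\begin{equation*}
  \sum_{m\le H}\frac{\Lambda(m)}{m\log H}f(x_m)
  =F(x_D)+\frac{f(x_D)}{\log H}
  +\Ocal^*\!\left(\frac{1.6\cdot 10^{-5}+2A_2D^{-1/2}+1.04DV/H}{\log H}\right).
\end{equation*}

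Next, from the definition \eqref{deff} of $F$, we have the identity
\begin{equation*}
  F(x)+\frac{f(x)}{\log H}=\sum_{n\le H^x}\frac{\chi(n)}{n\log H}.
\end{equation*}
Taking $x=x_D$ (so $H^{x_D}=H/D$) and comparing with Lemma~\ref{F1}, which says that
\begin{equation*}
  F(1)+\frac{f(1)}{\log H}
  =\sum_{n\le H/D}\frac{\chi(n)}{n\log H}
  +\Ocal^*\!\left(\frac{(D-1)V/H}{\log H}\right),
\end{equation*}
I obtain
\begin{equation*}
  F(x_D)+\frac{f(x_D)}{\log H}
  =F(1)+\frac{f(1)}{\log H}
  +\Ocal^*\!\left(\frac{(D-1)V/H}{\log H}\right).
\end{equation*}

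Combining the two displayed estimates yields the claimed bound, with total error
\begin{equation*}
  \frac{1.6\cdot 10^{-5}+2A_2D^{-1/2}+\bigl(1.04D+(D-1)\bigr)V/H}{\log H}
  =\frac{1.6\cdot 10^{-5}+2A_2D^{-1/2}+(2.04D-1)V/H}{\log H}.
\end{equation*}
The only delicate point is accounting carefully for the sign and direction of the two $\mathcal{O}^*$ contributions so that they add rather than potentially cancel; once this is done the arithmetic is routine. The hypothesis $D\ge D_0$ is used precisely to justify the value of $A_2$ through the cut-off in Lemma~\ref{complicated}.
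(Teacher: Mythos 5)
Your proof is correct and follows essentially the same route as the paper: unfold the sum into $\frac{1}{H\log H}\sum_{mn\le H}\chi(n)\Lambda(m)$, apply Lemma~\ref{complicated}, rewrite $F(x_D)+f(x_D)/\log H$ as $\sum_{n\le H/D}\chi(n)/(n\log H)$ via the definition \eqref{deff}, and compare with Lemma~\ref{F1}, adding the errors $1.04D$ and $D-1$ to get $2.04D-1$. Your write-up is in fact slightly more explicit than the paper's about the intermediate identity $F(x_D)+f(x_D)/\log H=\sum_{n\le H/D}\chi(n)/(n\log H)$, which the paper leaves implicit.
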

%%%%%%%%%%%%%%%%%%%%%%
The main proof only requires a lower bound
for $\sum_{m\leq H}{\Lambda(m)f(x_m)}/m$, see~\eqref{whistle}.
%%%%%%%%
\begin{proof}
  By using the definition of $f$, we get
  \begin{equation*}
    \sum_{m\le H}\frac{\Lambda(m)}{m\log H}
    f(x_m)
    =
      \sum_{mn\le H}\frac{\chi(n)\Lambda(m)}{H\log H}
    \end{equation*}
    and we appeal to Lemma~\ref{complicated}.
  This leads to
  \begin{equation*}
    \sum_{m\le H}\frac{\Lambda(m)}{m\log H}
    f(x_m)
    =
    \sum_{n\le H/D}\frac{\chi(n)}{n\log H}+\Ocal^*\biggl(
    \frac{1.6\cdot 10^{-5}+2A_2D^{-1/2}+1.04DVH^{-1}}{\log H}
    \biggr).
  \end{equation*}
  Note further that, by Lemma~\ref{F1} (we need only the upper estimate), we have
  \begin{equation}\label{post}
    F(1)=\sum_{n\le H/D}\frac{\chi(n)}{n\log H}
    -\frac{f(1)}{\log H}+\Ocal^*\biggl(\frac{(D-1)V/H}{\log H}\biggr).
  \end{equation}
\end{proof}
%%%%%%%%

%%%%%%%%%%%%%%%%%%

%%%%%%%%%%%%%%%%%%%%%%%%%%%%%%%%%%%%%%% 
%\section{The main inequality}
%%%%%%%%%%%%%%%%%%%%%%%%%%%%%%%%%%%%%%%

We are now in a position to prove the following crucial lemma.

%%%%%%%%%%%%%%%%%%%%%%%
\begin{lem}
  \label{L4}
   Let $H\geq 10^6$ and $x\geq 1/2$. Then we have
  \begin{equation*}
    0\le \frac{h(1,x)+ h(\chi,x)}{\log H}\le 2x
  \end{equation*}
  as well as, if $H$ also satisfies $H\geq V$,
  \begin{equation*}
  \begin{split}
    \frac{h(1,x)+ h(\chi,x)}{\log H}\le & 2-F(1)
    + f(1)-\frac{f(1)}{\log H}\\
    &+\frac{-1.15+3.81A_2^{2/3}(V/H)^{1/3}-VH^{-1}+R_\chi(H,V,q)}{\log H},
    \end{split}
  \end{equation*}
  where $A_2$ is as in Lemma \ref{complicated} when $D$ is taken to be
  $\left(\frac{A_2}{2.04}\frac{H}{V}\right)^{2/3}$ (see
  \eqref{macbeth}).
\end{lem}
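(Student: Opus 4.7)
The first pair of bounds is direct. For the lower bound, expand
$\frac{h(1,x)+h(\chi,x)}{\log H}=\sum_{n\le H^x}\frac{(1+\chi(n))\Lambda(n)}{n\log H}\ge 0,$
since $\chi$ quadratic gives $1+\chi(n)\in\{0,1,2\}$. For the upper bound, $1+\chi(n)\le 2$ gives $h(1,x)+h(\chi,x)\le 2\tilde\psi(H^x)$; with $H^x\ge 10^3$ (from $H\ge 10^6$, $x\ge 1/2$), Lemma~\ref{appLambdatilde} yields $\tilde\psi(H^x)\le x\log H-0.545$, so $(h(1,x)+h(\chi,x))/\log H\le 2x-1.09/\log H<2x$.

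For the second inequality (applied at $x=1$, as signalled by the $F(1)$ and $f(1)$ on the right and by the leading ``$2$'' matching $2h(1,1)/\log H$), I would decompose
$\frac{h(1,1)+h(\chi,1)}{\log H}=\frac{2h(1,1)}{\log H}-\frac{h(1,1)-h(\chi,1)}{\log H}.$
The first piece is handled by Lemma~\ref{psitildeasymp}, which gives $2h(1,1)/\log H\le 2-2\gamma/\log H+O(1/\log^3 H)$, producing the ``$2$'' and essentially the $-2\gamma\approx -1.15$ of the error term. For the difference, the key pointwise inequality $(1-\chi(m))\Lambda(m)(1-f(x_m))\ge 0$ (valid because $\chi(m)\le 1$, $\Lambda\ge 0$, $|f|\le 1$) expands to
$\frac{h(1,1)-h(\chi,1)}{\log H}\ge \sum_{m\le H}\frac{\Lambda(m)f(x_m)}{m\log H}-\sum_{m\le H}\frac{\chi(m)\Lambda(m)f(x_m)}{m\log H}.$

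To finish, apply the lower bound of Lemma~\ref{FirstApp} to the first sum (the one-sided direction already flagged by the paper), giving $\ge F(1)+f(1)/\log H-E_1/\log H$ with $E_1=1.6\cdot 10^{-5}+2A_2D^{-1/2}+(2.04D-1)V/H$; and the upper bound of~\eqref{pencil} to the second sum, giving $\le f(1)+R_\chi/\log H$. Subtracting and inserting back into the decomposition yields the claimed bound. The choice $D=(A_2H/(2.04V))^{2/3}$ balances $2A_2D^{-1/2}$ against $2.04DV/H$: both equal $2.04^{1/3}A_2^{2/3}(V/H)^{1/3}$, so their sum is $3\cdot 2.04^{1/3}\approx 3.81$ times $A_2^{2/3}(V/H)^{1/3}$; the isolated $-V/H$ comes from the $-1$ in $(2.04D-1)V/H$, and $-1.15$ absorbs $-2\gamma+1.6\cdot 10^{-5}$ after rounding. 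The main obstacle is careful bookkeeping---collecting the three error streams (from $\tilde\psi$, Lemma~\ref{FirstApp}, and~\eqref{pencil}) with compatible one-sided directions, and verifying that $D^*$ lies in the admissible range $D\ge D_0$ of Lemma~\ref{complicated} for the chosen $A_2$.
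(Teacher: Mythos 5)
Your proposal follows the paper's own route: the pointwise inequality $(1-\chi(m))\Lambda(m)(1-f(x_m))\ge 0$ summed over $m\le H$ is exactly the quantity $S$ of \eqref{defS}, and you then combine the upper-bound half of \eqref{pencil} with the lower-bound half of Lemma~\ref{FirstApp} and make the same choice \eqref{macbeth} of $D$. (Small slip there: at that $D$ one has $A_2D^{-1/2}=2.04DV/H$, so the two terms $2A_2D^{-1/2}$ and $2.04DV/H$ are in ratio $2:1$ rather than equal; their sum is still $3\cdot 2.04^{1/3}A_2^{2/3}(V/H)^{1/3}\le 3.81\,A_2^{2/3}(V/H)^{1/3}$, so the constant is unaffected.) Two genuine gaps remain. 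First, the second inequality of the lemma is asserted for every $x\ge 1/2$, not only at $x=1$, and this generality is actually needed: it is what furnishes hypothesis $(H_3)$ of Lemma~\ref{opt}, which is invoked there for all $u\ge\theta\ge 1/2$. Your reading ``applied at $x=1$'' skips the required extension step. It is short but must be supplied: for $1/2\le x\le 1$ one writes $h(\chi,x)=h(\chi,1)-\sum_{H^x<m\le H}\chi(m)\Lambda(m)/m\le h(\chi,1)+h(1,1)-h(1,x)$, whence $h(1,x)+h(\chi,x)\le h(1,1)+h(\chi,1)$ and the case $x=1$ suffices.

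Second, the constant $-1.15$ does not follow from the estimate you cite. You need an upper bound $h(1,1)\le \log H-c$ with $2c-1.6\cdot10^{-5}\ge 1.15$, i.e.\ $c\ge 0.5751$. Using \eqref{lambdalog2} alone gives $c=\gamma-1.3/\log^2H$, which at the admissible endpoint $H=10^6$ is about $0.570$; the resulting constant is roughly $-1.14$, so ``$-2\gamma$ absorbed after rounding'' is not quite enough, the $1.3/\log^2 H$ term being non-negligible there. The repair is a two-case check: for $10^6\le H\le 10^{19}$ use \eqref{lambdasqrt}, giving $c\ge\gamma-10^{-3}\ge 0.576$, and for $H>10^{19}$ use \eqref{lambdalog2}, giving $c\ge\gamma-1.3/(19\log 10)^2\ge 0.576$; then $2\times 0.576-1.6\cdot 10^{-5}\ge 1.15$ as required. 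This is precisely the bound $h(1,1)\le\log H-0.576$ that the paper's proof invokes at the corresponding point. With these two additions your argument coincides with the paper's proof.
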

%%%%%%%%%%%%%%%%%%%%%%%
This is the equivalent of \cite[Lemma 2]{Stephens} by Stephens.

%%%%%%%%%%%%%%%%%%%%%%%
\begin{proof}
  The first inequality follows by
  Lemma~\ref{appLambdatilde}. Concerning the second one, we proceed as
  follows. Define  
  \begin{equation}
    \label{defS}
    S=\sum_{m\le H}\frac{\Lambda(m)}{m\log H}
    \bigl(1-f(x_m)\bigr)
    -\sum_{m\le H}\frac{\chi(m)\Lambda(m)}{m\log H}
    \bigl(1-f(x_m)\bigr).
  \end{equation}
  Since $|\chi(m)|$, $|f(x_m)|\leq 1$,  we have $S\ge0$. Furthermore, on expanding and using
  the second part of Lemma~\ref{L2}, we find that
  \begin{multline}
    \label{eq:2}
    S=\sum_{m\le H}\frac{\Lambda(m)}{m\log H}
    -\sum_{m\le H}\frac{\Lambda(m)}{m\log H}
    f(x_m)
    \\-\frac{h(\chi,1)}{\log H}+  f(1)
    +\Ocal^*\biggl(\frac{R_\chi(H,V,q)}{\log H}\biggr).
  \end{multline}
  Now we use Lemma~\ref{FirstApp}.  Since $S\ge0$, this leads to the
  inequality
  \begin{equation}\label{whistle}
  \begin{split}
    \frac{h(\chi,1)}{\log H}\le & \frac{h(1,1)}{\log
      H}-F(1)-\frac{f(1)}{\log H}
     + f(1)\\
     &+\frac{1.6\cdot10^{-5}+2A_2D^{-1/2}+(2.04D-1)VH^{-1}+R_\chi(H,V,q)}{\log H}.
     \end{split}
  \end{equation}
  We select
  \begin{equation}\label{macbeth}
    D=\Bigl(\frac{A_2}{2.04}\frac{H}{V}\Bigr)^{2/3}
  \end{equation}
so that the expression in (\eqref{whistle}) involving $D$ is minimised. This then gives
 % and note that $2A_2(1.04/\sqrt{2})^{1/3}+1.04(A_2/1.04)^{2/3}\le 3.83$ to get
  \begin{equation*}
  \begin{split}
    \frac{h(\chi,1)}{\log H}\le & \frac{h(1,1)}{\log
      H}-F(1)-\frac{f(1)}{\log H}
    + f(1)\\
    &+\frac{1.6\cdot10^{-5}+3.81A_2^{2/3}(V/H)^{1/3}-VH^{-1}+R_\chi(H,V,q)}{\log H}
    \end{split}.
  \end{equation*}
  Let us extend this inequality to $h(\chi,x)$. We simply write
  \begin{align*}
    h(\chi,x)
    &=h(\chi,1)-\sum_{H^x<m\le H}\frac{\chi(m)\Lambda(m)}{m}
    \\&\le h(\chi,1)+h(1,1)-h(1,x),
  \end{align*}
  hence the result, since $2h(1,1)/\log H\le 2-2\times0.576/\log H$ by
  Lemma~\ref{appLambdatilde} and  $1.6\cdot10^{-5} -2\times0.576\le -1.15$.
\end{proof}
%%%%%%%%%%%%%%%%%%%%%%% 

%%%%%%%%%%%%%%%%%%%%%%%
%%%%%%%%%%%%%%%%%%%%%%% 

%%%%%%%%%%%%%%%%%%%%%%%%%%%%%%%%%%%%%%%
\section{A result in optimization}\label{captain5}
%%%%%%%%%%%%%%%%%%%%%%%%%%%%%%%%%%%%%%%

This section contains a refined version of a theorem of Stephens. No further arithmetical material
is being introduced. We start with a
technical lemma.
%%%%%%%%%%%%%%%%%%%%%%%%%%%%%%%%%%
\begin{lem}
  \label{aux}
  We have
  \begin{multline*}
    -4\int_\theta^x(x-u)\log u\, du
  +
  2\int_{x-\theta}^\theta udu+\int_\theta^x 2\theta du
  \\=2x(x-x\log x-\theta)
  +(2x-\theta)\theta(1+2\log\theta).
  \end{multline*}
\end{lem}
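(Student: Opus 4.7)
The plan is to verify this purely algebraic/calculus identity by evaluating each of the three integrals on the left-hand side in closed form and then matching the result against the expansion of the right-hand side. No nontrivial mathematical content enters; the only sources of trouble are signs and $\log\theta$ versus $\log x$ contributions, so I would carry out the bookkeeping carefully.

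First I would compute
\[
\int_\theta^x (x-u)\log u\,du = x\int_\theta^x\log u\,du - \int_\theta^x u\log u\,du,
\]
using the standard antiderivatives $\int\log u\,du = u\log u - u$ and $\int u\log u\,du = \tfrac{u^2}{2}\log u - \tfrac{u^2}{4}$. This gives
\[
\int_\theta^x(x-u)\log u\,du = \tfrac{x^2}{2}\log x - \tfrac{3x^2}{4} - x\theta\log\theta + x\theta + \tfrac{\theta^2}{2}\log\theta - \tfrac{\theta^2}{4},
\]
whose multiple by $-4$ contributes $-2x^2\log x + 3x^2 + 4x\theta\log\theta - 4x\theta - 2\theta^2\log\theta + \theta^2$.

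Next I would evaluate the two elementary integrals:
\[
2\int_{x-\theta}^\theta u\,du = \theta^2-(x-\theta)^2 = 2x\theta - x^2,\qquad \int_\theta^x 2\theta\,du = 2\theta(x-\theta) = 2x\theta - 2\theta^2.
\]
Summing the three contributions, the cross terms $-4x\theta + 2x\theta + 2x\theta$ cancel, and I obtain
\[
-2x^2\log x + 2x^2 + 4x\theta\log\theta - 2\theta^2\log\theta - \theta^2
\]
for the left-hand side.

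Finally I would expand the right-hand side $2x(x - x\log x - \theta) + (2x-\theta)\theta(1+2\log\theta)$, obtaining
\[
2x^2 - 2x^2\log x - 2x\theta + (2x\theta-\theta^2) + (4x\theta-2\theta^2)\log\theta,
\]
in which $-2x\theta + 2x\theta$ cancels; this matches the left-hand side term by term. The main (only) obstacle is clerical: keeping the signs and the $\log\theta$ coefficients straight when combining the boundary evaluations at $u=\theta$ with the factor $-4$. The lemma follows.
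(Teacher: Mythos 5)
Your computation is correct, and it follows essentially the same route as the paper: evaluate the three elementary integrals with the standard antiderivatives $\int\log u\,du$ and $\int u\log u\,du$, then match the expanded right-hand side. Nothing further is needed.
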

%%%%%%%%%%%%%%%%%%%%%%%%%%%%%%%%%%
%%%%%%%%%%%%%%%%%%%%%%%%%%%%%%%%%%
\begin{proof}
  Notice that $2\int u\log u du=u^2\log u-(u^2/2)$ and thus
  \begin{align*}
    4\int_\theta^x(x-u)\log u\, du
    &=4x(x\log x-x-\theta\log\theta+\theta)-2x^2\log x+x^2+2\theta^2\log \theta-\theta^2
    \\&=4x(-\theta\log\theta+\theta)+2x^2\log x-3x^2+2\theta^2\log \theta-\theta^2.
  \end{align*}
  Next,
  \begin{equation*}
  2\int_{x-\theta}^\theta udu+\int_\theta^x 2\theta du
  =\theta^2-(x-\theta)^2+2\theta(x-\theta)=-x^2+4x\theta-2\theta^2,
\end{equation*}
and thus
\begin{multline*}
  -4\int_\theta^x(x-u)\log u\, du
  +
  2\int_{x-\theta}^\theta udu+\int_\theta^x 2\theta du
  \\=
  4x(\theta\log\theta-\theta)-2x^2\log x+3x^2-2\theta^2\log
  \theta+\theta^2
  %%%%%%%%%%%%%
  -x^2+4x\theta-2\theta^2,
 % \\= -2x^2\log x+2x^2  -2\theta^2\log\theta-\theta^2  +4x\theta\log\theta
%  \\= x\varphi(x)  +(2x-\theta)\theta(1+2\log\theta).
\end{multline*}
whence the lemma follows after some simple algebraic rearrangement.
\end{proof}
%%%%%%%%%%%%%%%%%%%%%%%%%%%%%%%%%%

%%%%%%%%%%%%%%%%%%%%%%%%%%%%%%%%%%%%%%%%%%%%%%%%%%%%
\begin{lem}
  \label{opt}
  Let $H>1$ be a real parameter. Suppose we are given a sequence of
  non-negative real numbers $(u_m)_{1\le m\le H}$ and a continuous
  function $G$ over $[0,1]$. Assume we have, for every
  $x\in[0,1]$, that
  \begin{equation}
    \tag{$H_0$}
    G(x)\le x,
  \end{equation}
  that for some parameters $a$ and  $\ve_2$, we have,
  when $x\ge 1/2$,
  \begin{equation}
    \tag{$H_1$}
    (x+a)G(x)
    \le \sum_{m\le H^x}\frac{u_m}{\log H}
    G\biggl(x-\frac{\log m}{\log H}\biggr)
    +\ve_2,
  \end{equation}
  that
  \begin{equation}
    \tag{$H_2$}
    0\le \sum_{m\le H^x}\frac{u_m}{\log H}\le 2x
  \end{equation}
  and that, for some parameter $\ve_1$ we have, when $x\ge 1/2$,
  \begin{equation}
    \tag{$H_3$}
    \sum_{m\le H^x}\frac{u_m}{\log H}\le 2-G(1)+\ve_1.
  \end{equation}
  Then either $G(1)\le 2(1-1/\sqrt{e})$ or
  \begin{equation}
    2a\theta\log\theta
    -
    2\theta(1/\sqrt{e}-\theta)(2+\log \theta)
    +\ve_1+\ve_2\ge0
  \end{equation}
  where $\theta=1-G(1)/2$ belongs to $[1/2,1/\sqrt{e}]$.
\end{lem}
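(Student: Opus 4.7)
The plan is to dispatch the dichotomy as follows: if $\theta := 1 - G(1)/2 \ge 1/\sqrt{e}$ then the first alternative $G(1) \le 2(1-1/\sqrt{e})$ holds, so we may assume $\theta \in [1/2, 1/\sqrt{e})$ and set $\tau = \theta + \varepsilon_1/2$. Letting $T(y) := \sum_{m\le H^y} u_m/\log H$, the hypotheses $(H_2)$ and $(H_3)$ combined with the monotonicity of $T$ (which follows from $u_m\ge 0$) yield $T(y)\le \min(2y,2\tau)$ for all $y\in[0,1]$; indeed, applying $(H_3)$ at $x=1/2$ together with monotonicity extends the bound $T(y)\le 2\tau$ down to $y<1/2$ as well.

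The heart of the proof is to apply $(H_1)$ at $x=1$ to get
\[
(1+a)G(1)\le \int_0^1 G(1-y)\,dT(y)+\varepsilon_2,
\]
and to bound the inner $G(1-y)$ carefully. For $y\le 1-\tau$ (so that $1-y\ge \tau\ge 1/2$), I would apply $(H_1)$ at $x=1-y$ and use $(H_0)$ on the inner integrand to obtain the refined bound
\[
G(1-y)\le \frac{2\tau(1-y)-\tau^2+\varepsilon_2}{1-y+a};
\]
for $y>1-\tau$, I would use simply $G(1-y)\le 1-y$ from $(H_0)$. Substituting these bounds into the integral and performing Stieltjes integration by parts, the boundary contributions at the splitting point $y=1-\tau$ cancel, leaving a combination of $\tau^2\int_0^{1-\tau}T(y)\,dy/(1-y+a)^2$ and $\int_{1-\tau}^1 T(y)\,dy$. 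Inserting $T(y)\le\min(2y,2\tau)$ turns these into elementary integrals whose total matches, up to errors of order $a$, $\varepsilon_1$ and $\varepsilon_2$, the three integrals on the left-hand side of Lemma \ref{aux} at $x=1$ with its parameter set to $\tau$.

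Invoking Lemma \ref{aux} then collapses the bound to
\[
(1+a)G(1)\le 2(1-\tau)+(2-\tau)\tau(1+2\log\tau)+E,
\]
where $E$ is a bounded combination of the error parameters. Substituting $G(1)=2(1-\theta)$, linearising $\tau=\theta+\varepsilon_1/2$ in $\varepsilon_1$, and using the identity $1+2\log(1/\sqrt{e})=0$ to refactor $(1+2\log\theta)$ through $(\theta - 1/\sqrt{e})(2+\log\theta)$ by a mean-value (or direct algebraic) argument that pulls out the linear vanishing at $\theta=1/\sqrt{e}$, the target inequality emerges. The principal obstacle will be the integration-by-parts step: one must ensure that the resulting integrals match Lemma \ref{aux}'s left-hand side exactly — particularly the coefficient $-4$ in front of $\int_\theta^{1}(1-u)\log u\,du$ — and then perform the algebraic manipulation so that $\varepsilon_1$ and $\varepsilon_2$ both end up with coefficient $1$ in the final inequality.
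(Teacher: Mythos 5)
Your individual steps (the bound $T(y)\le\min(2y,2\tau)$, the Stieltjes integration by parts, the cancellation at the splitting point) are fine, but the overall scheme is quantitatively too lossy to reach the constant $1/\sqrt{e}$, so the target inequality cannot ``emerge'' at the end. Carry your plan out with all error parameters set to zero ($a=\varepsilon_1=\varepsilon_2=0$, $\tau=\theta$): the inner application of $(H_1)$ plus $(H_0)$ gives $G(w)\le 2\theta-\theta^2/w$ for $w\ge\theta$, and after the integration by parts and inserting $T(y)\le\min(2y,2\theta)$ the outer application of $(H_1)$ at $x=1$ yields exactly
\begin{equation*}
  2-2\theta=G(1)\;\le\;\theta^2\!\int_0^{1-\theta}\frac{2y\,dy}{(1-y)^2}+\int_{1-\theta}^{1}\min(2y,2\theta)\,dy
  \;=\;6\theta-1-4\theta^2+2\theta^2\log\theta,
\end{equation*}
i.e.\ $h(\theta):=8\theta-3-4\theta^2+2\theta^2\log\theta\ge0$. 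But $h(e^{-1/2})=8e^{-1/2}-3-5e^{-1}\approx+0.013$ while $h(0.6)\approx-0.008$, so the root of $h$ sits near $0.6025$, strictly below $e^{-1/2}\approx0.6065$. Hence for $\theta\in(0.603,\,e^{-1/2})$ your derived inequality holds and produces no contradiction, whereas the lemma's conclusion $2a\theta\log\theta-2\theta(1/\sqrt{e}-\theta)(2+\log\theta)+\varepsilon_1+\varepsilon_2\ge0$ is false there once the errors are small: your method only proves $G(1)\lesssim 2(1-0.6025)$, not the stated dichotomy. This also shows your claim that the resulting integrals ``match Lemma \ref{aux}'s left-hand side at $x=1$'' is incorrect (at $\theta=0.6$ that left-hand side is about $0.782$, your total is about $0.792$), so no amount of refactoring through the vanishing of $1+2\log\theta$ at $\theta=1/\sqrt{e}$ can recover the statement.

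The missing idea is the bootstrap. The paper does not stop after two applications of $(H_1)$: it introduces the comparison function $\varphi(y)=2(y-y\log y-\theta)$, notes $G(\theta)\le\varphi(\theta)$ and $G(1)=\varphi(1)$, and runs a continuity/first-crossing argument: assuming $G\le\varphi$ on $[\theta,x]$, it feeds $\varphi$ (not the trivial bound from $(H_0)$) back into $(H_1)$ for the inner values above $\theta$, and Lemma \ref{aux} then shows the deficit is exactly $\theta^2(1+2\log\theta)$, which changes sign precisely at $\theta=1/\sqrt{e}$. In effect this iterates the functional inequality indefinitely; any fixed finite number of iterations (yours corresponds to two) converges only to a threshold strictly below $e^{-1/2}$. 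To repair your argument you would have to replace the innermost use of $(H_0)$ on $[\tau,1-y]$ by the self-consistent bound $G\le\varphi$, which is exactly the paper's continuity argument.
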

%%%%%%%%%%%%%%%%%%%%%%%%%%%%%%%%%%%%%%%%%%%%%%%%%%%%

\begin{proof}
Set
\begin{equation}
  \label{deftheta}
  \theta=1-G(1)/2,\quad \varphi(y)=2(y-y\log y-\theta).
\end{equation}
The function $\varphi$ is increasing (its derivative is $-2\log y$) on $(0,1]$ and
takes the positive value $-2\theta\log \theta$ at $y=\theta$.
Note that $\theta\ge1/2$ since $G(1)\le 1$, and that when
$\theta\ge1/\sqrt{e}$, our result is immediate. 
Let us assume that $\theta < 1/\sqrt{e}$ so that $\theta+2\theta\log\theta<0$.
Assume that, when $\theta\le y\le Z$, we have $G(y)\le
\varphi(y)$. This latter inequality translates into
\begin{equation*}
  G(1)-G(y)\ge 2(1-y+y\log y).
  %g(t)=1-t+t*log(t)
\end{equation*}
Our initial remark is that $\theta$ is such a number.
\begin{proof}
  Indeed, if it where not, we would have
  \begin{equation*}
    G(1)=G(\theta)+G(1)-G(\theta)\le \theta + 2(1-\theta+\theta\log \theta)
  \end{equation*}
  since $G(x)\le x$. We notice next that $G(1)=2-2\theta$, so that the
  above inequality can be rewritten as $G(1)\le
  G(1)+\theta+2\theta\log\theta<G(1)$ by the inequality assumed for
  $\theta$, leading to a contradiction.
\end{proof}
We define for this proof
\begin{equation}
  \label{defg}
  g(y)=\sum_{m\le H^y}u_m/\log H.
\end{equation}
We find that, for $Z\ge x\ge\theta$,
\begin{multline*}
  \sum_{m\le H^x}\frac{u_m}{\log H}
      G\biggl(x-\frac{\log
      m}{\log H}\biggr)
  \le
  \sum_{m\le H^{x-\theta}}\frac{u_m}{\log H}
  \varphi\biggl(x-\frac{\log
    m}{\log H}\biggr)
  \\
  +
  \sum_{H^{x-\theta}<m\le H^x}\frac{u_m}{\log H}
  \biggl(x-\frac{\log
    m}{\log H}\biggr)
\end{multline*}
by bounding above $G(y)$ by $y$ $(H_0$) when $y\le \theta$.
We study separately the two right-hand side sums, say $S_1$ and $S_2$. First we note
that, on recalling the definition~\eqref{defg} of $g$:
\begin{align*}
  S_1
  &=
    \sum_{m\le H^{x-\theta}}\frac{u_m}{\log H}
    \int_\theta^{x-\frac{\log m}{\log H}}\varphi'(u)du
    +g(x-\theta)\varphi(\theta)
  \\&=
      g(x-\theta)\varphi(\theta)
      -2\int_\theta^x g(x-u)\log u\,du
\end{align*}
while
\begin{equation*}
  S_2
  =
    \sum_{H^{x-\theta<m\le H^x}}\frac{u_m}{\log H}
    \int_{\frac{\log m}{\log H}}^x du
  =
    \int_{x-\theta}^x (g(u)-g(x-\theta))du
\end{equation*}
and this amounts to
\begin{equation*}
  S_1+S_2
  =
    g(x-\theta)(\varphi(\theta)-\theta)
    -2\int_\theta^x g(x-u)\log u\, du
    +\int_{x-\theta}^x g(u)du.
\end{equation*}
In the first integral, we bound above $g(x-u)$ by
$2(x-u)$ by $(H_2)$.
We split the second integral at $u=\theta$; between $x-\theta$ and
$\theta$, we bound above $g(u)$ again by $2u$ while in the
later range, we bound above $g(u)$ by $2\theta+\ve_1$ by
$(H-3)$ (valid since $u\ge \theta\ge 1/2$). 
We infer in this manner that
\begin{multline*}
  S_1+S_2
  \le
    g(x-\theta)(\varphi(\theta)-\theta)
   \\ -4\int_\theta^x(x-u)\log u\, du
    +2\int_{x-\theta}^\theta udu
    +\int_\theta^x (2\theta+\ve_1)du.
\end{multline*}
By Lemma~\ref{aux} and noticing that
$\varphi(\theta)-\theta=-\theta(1+2\log\theta)$, we get (again bounding above $g(x-u)$ by
$2(x-u)$ by $(H_2)$)
\begin{align*}
  S_1+S_2
  &\le
  x\varphi(x)
  +
  \bigl(g(x-\theta)+\theta-2x\bigr)
  (\varphi(\theta)-\theta)
  +\ve_1
  \\&\le
  x\varphi(x)
  +
  \theta^2
  (1+2\log\theta)
  +\ve_1.
\end{align*}
By $(H_1)$ and the above, we infer that
\begin{equation*}
  (x+a)G(x)\le
  x\varphi(x)
  +
  \theta^2
  (1+2\log\theta)
  +\ve_1+\ve_2.
\end{equation*}
We also find that, when $\theta\le 1/\sqrt{e}$, we have
\begin{equation*}
  0-\theta^2
  (1+2\log\theta)
  =\int_\theta^{1/\sqrt{e}}2u(2+\log u)du\notag
  \ge 2\theta(1/\sqrt{e}-\theta)(2+\log \theta).
\end{equation*}
Hence, we get
\begin{equation*}
  (x+a)G(x)\le
  (x+a)\varphi(x)
  -a\varphi(x)
  -
  2\theta(1/\sqrt{e}-\theta)(2+\log \theta)
  +\ve_1+\ve_2.
\end{equation*}
We can now use $\varphi(x)\ge \varphi(\theta)=-\theta\log \theta$,
getting
\begin{equation*}
  (x+a)G(x)\le
  (x+a)\varphi(x)
  +2a\theta\log\theta
  -
  2\theta(1/\sqrt{e}-\theta)(2+\log \theta)
  +\ve_1+\ve_2.
\end{equation*}
When $2a\theta\log\theta
  -
  2\theta(1/\sqrt{e}-\theta)(2+\log \theta)
  +\ve_1+\ve_2<0$, we would have
$G(x)<\varphi(x)$. However the function $G$ is continuous and
$G(1)=\varphi(1)$, there exists an $x_0$ between $\theta$ and $1$ for
which $G(x_0)=\varphi(x_0)$ and $G(x)\le\varphi(x)$ for $x$ between
$\theta$ and $x_0$. The above inequality then leads to a
contradiction.
Hence we have
\begin{equation*}
  2a\theta\log\theta
  -
  2\theta(1/\sqrt{e}-\theta)(2+\log \theta)
  +\ve_1+\ve_2\ge0.\qedhere
\end{equation*}
\end{proof}

%%%%%%%%%%%%%%%%%%%%%%%%%%%%%%%%%%%%%%%
\section{Proof of Theorems \ref{goal} and \ref{goal2}}\label{captain6}
%%%%%%%%%%%%%%%%%%%%%%%%%%%%%%%%%%%%%%%

We use Lemma~\ref{opt} with $G(x)=F(x)$ and
$u_m=(1+\chi(m))\Lambda(m)/m$.
%%%%%%%%%%%%%%%%
\subsubsection*{Initial upper bound}
%%%%%%%%%%%%%%%%
% Lemma~\ref{L1chi0} gives us
% \begin{equation}
%   \label{IniMajo}
%   L(1,\chi)\le F(1)\log H+ C_0+\frac{c_1\sqrt{q}}{H}
% \end{equation}
% where $(C_0,c_1)=(0.47,2/5)$ when $\chi$ is even and
% $(C_0,c_1)=(0.266,4\pi/9)$ when $\chi$ is odd and where we assume that
% $H\ge\sqrt{q}\ge 1000$.
Lemma~\ref{L1chi} gives us
\begin{equation}
  \label{IniMajo}
  L(1,\chi)\le F(1)\log H+ \frac{V}{H}.
\end{equation}

%%%%%%%%%%%%%%%%
\subsubsection*{Hypotheses $(H_0)$, $(H_1)$, $(H_2)$ and $(H_3)$}
%%%%%%%%%%%%%%%%

Hypothesis $(H_0)$ is granted by the
bound $|f(u)|\le 1$. By Lemma \ref{L7} we can then set 
\begin{equation}
  \label{defve2}
  a\le\frac{\gamma-1}{\log H},\quad \ve_2\le \frac{1.411}{\log^2 H},
\end{equation}
and this gives us Hypothesis $(H_1)$.

Lemma~\ref{psitildeasymp} is enough to grant Hypothesis~$(H_2)$.
Finally, by Lemma~\ref{L4} and provided that $H\geq\max(V,10^6)$, Hypothesis
$(H_3)$ is satisfied with
\begin{equation}
  \label{defve1}
  \ve_1\le f(1)-\frac{f(1)}{\log H}
  +\frac{-1.15+3.81A_2^{2/3}(V/H)^{1/3}-VH^{-1}+R_\chi(H,V,q)}{\log H}.
\end{equation}
We will further majorize $f(1)$ by $V/2$ when $\chi$ is even and by
$V/H$ when $\chi$ is odd.

%%%%%%%%%%%%%%%%
\subsubsection*{Using Lemma~\ref{opt}}
%%%%%%%%%%%%%%%%
So we infer that
\begin{equation}
  \label{eq:4}
  %L(1,\chi)\le 2(1-\theta)\log H +C_0+\frac{c_1\sqrt{q}}{H}
  L(1,\chi)\le 2(1-\theta)\log H +\frac{V}{H}
\end{equation}
where $\theta\in[1/2,1/\sqrt{e}]$ satisfies
\begin{equation}\label{thetaeq}
    2a\theta\log\theta
    -
    2\theta(1/\sqrt{e}-\theta)(2+\log \theta)
    +\ve_1+\ve_2\ge0.
  \end{equation}
  Since $a<0$, if this inequality is satisfied for $H_0$ then it
  remains true for $H\ge H_0$. We select $H=BV$, for some
  parameter $B$, and bound $|f(1)|$ by $V/H$. Therefore, for
  $q\geq q_{0}$ we have that
\begin{equation}\label{ivory}
D \geq \left(\frac{A_{2}B}{2.04 } \right)^{2/3}=D_0.
\end{equation}
So, here are possible choices:
\begin{align}
  \label{eq:3}
  &B\ge \sqrt{2}/2.04 \rightarrow A_2=\sqrt{2},\\
  &B\ge 39.6 \rightarrow A_2=0.956,\\
  &B\ge 79.5\rightarrow A_2=0.94.
\end{align}
%%%%%%%%%%%%%%%%
\subsubsection*{Setting the numerics}
%%%%%%%%%%%%%%%%

We can now prove Theorems \ref{goal} and \ref{goal2}.
We use the expression for $V$ given in Lemma
\ref{PV}.
We take $H=BV$ which we assume to be $\ge 10^6$, we also assume that
$q\ge q_0$ so that $V\ge V_0$. Given a choice of $B$, we select
\begin{align}
  \label{eq:5}
  a&=\frac{\gamma-1}{\log( BV_0)},\\
  \ve_2&=\frac{1.411}{\log(BV_0)^2},\\
  \ve_1&=\frac{\delta(\chi)}{B}-\frac{\delta(\chi)}{B\log (BV_0)}
  +\frac{-1.15+3.81A_2^{2/3}B^{-1/3}-B^{-1}+R(BV_0,V_0,q_0)}{\log (BV_0)}.
\end{align}
where $\delta(\chi)=(3-\chi(-1))/4$.
We then compute the smallest solution $\theta^*$ to~\eqref{eq:4} and infer that
\begin{equation}
  \label{eq:6}
  \frac{L(1,\chi)}{\log q}\le 2(1-\theta^*)\frac{\log B+\log V_0}{\log
    q_0}
  % +\frac{C_0}{\log q_0}+\frac{c_1\sqrt{q_0}}{BV_0\log q_0}.
  +\frac{1}{B\log q_0}.
\end{equation}

\textbf{Result for $\chi$ even and primitive:}
  We select $B=51$, and infer that $L(1,\chi)<\frac{1}{2}\log q$ when
  $q\ge 7\cdot10^{22}$. But this is already known for all $q$'s by
  \cite{Ramare*01}. Even more is true if we combine the theorem of
  Saad Eddin in \cite{Saad-Eddin*16} together with \cite[Corollary 1]{Ramare*02c}.

  We select $B=80$, and infer that $L(1,\chi)<\frac{9}{20}\log q$ when
  $q\ge 2\cdot10^{49}$.

  \textbf{Result for $\chi$ odd and primitive:}
  We select $B=90$, and infer that $L(1,\chi)<\frac{1}{2}\log q$ when
  $q\ge 2\cdot 10^{23}$.

  We select $B=145$, and infer that $L(1,\chi)<\frac{9}{20}\log q$ when
  $q\ge 5\cdot10^{50}$.

\subsection*{Acknowledgements}
We are grateful to Enrique Trevi\~{n}o for some preliminary discussions on this topic.

 %\printbibliography
\bibliographystyle{plain}

\bibliography{JRT-v10.bib}

\end{document}